\def\blx@maxline{77}
\definecolor{blue75}{rgb}{0,0,.75}
\definecolor{green75}{rgb}{0,.75,0}
\newcommand{\parenthezises}[1]{\arabic{#1}}
\begin{document}

\newcommand{\cmg}{\color{magenta}}
%%%dumbers%%%
\newcommand{\D}{\mathbb{D}}
\newcommand{\R}{\mathbb{R}}
\newcommand{\N}{\mathbb{N}}

%%%Operators%%%
\def\diam{\operatorname{diam}}
\def\dist{\operatorname{dist}}
\def\diver{\operatorname{div}}
\def\ess{\operatorname{ess}}
\def\inner{\operatorname{int}}
\def\osc{\operatorname{osc}}
\def\sign{\operatorname{sign}}
\def\supp{\operatorname{supp}}
%%%Spaces%%%
\newcommand{\BMO}{BMO(\Omega)}
\newcommand{\LOne}{L^{1}(\Omega)}
\newcommand{\LOnen}{(L^{1}(\Omega))^d}
\newcommand{\LTwo}{L^{2}(\Omega)}
\newcommand{\Lq}{L^{q}(\Omega)}
\newcommand{\Lp}{L^{2}(\Omega)}
\newcommand{\Lpn}{(L^{2}(\Omega))^d}
\newcommand{\LInf}{L^{\infty}(\Omega)}
\newcommand{\HOneO}{H^{1,0}(\Omega)}
\newcommand{\HTwoO}{H^{2,0}(\Omega)}
\newcommand{\HOne}{H^{1}(\Omega)}
\newcommand{\HTwo}{H^{2}(\Omega)}
\newcommand{\HmOne}{H^{-1}(\Omega)}
\newcommand{\HmTwo}{H^{-2}(\Omega)}

\newcommand{\LlogL}{L\log L(\Omega)}

\def\avint{\mathop{\,\rlap{-}\!\!\int}\nolimits}

%%%Theorems%%%
\newtheorem{Theorem}{Theorem}[section]
\newtheorem{Assumption}[Theorem]{Assumptions}
\newtheorem{Corollary}[Theorem]{Corollary}
\newtheorem{Convention}[Theorem]{Convention}
\newtheorem{Definition}[Theorem]{Definition}
\newtheorem{Example}[Theorem]{Example}
\newtheorem{Lemma}[Theorem]{Lemma}

\theoremstyle{definition}
\newtheorem{Remark}[Theorem]{Remark}
\newtheorem{Notation}[Theorem]{Notation}

\newtheorem{proofpart}{Step}
\makeatletter
\@addtoreset{proofpart}{Theorem}
\makeatother
%%%Settings%%%
\numberwithin{equation}{section}
\renewcommand\Affilfont{\itshape\small}

\title{On a model for epidemic spread with interpopulation contact and repellent taxis}
\author{{C}higanga {S}amson {R}uoja\footnote{supported by  the German Academic Exchange Service (DAAD), grant \#57191713}}
\affil{Dar es Salaam University College of Education\protect\\
 P.O.Box 2329, Dar es Salaam,  Tanzania\protect\\
  e-mail: {samson.chiganga@yahoo.com}}

\author{{C}hristina {S}urulescu}
\affil{Technische Universit\"at Kaiserslautern, Felix-Klein-Zentrum f\"ur Mathematik\protect\\
Paul-Ehrlich-Str. 31, 67663 Kaiserslautern, Germany\protect\\
  e-mail: {A.Zhigun@qub.ac.uk}}
\date{}

\author{Anna~Zhigun\footnote{corresponding author}}
\affil{Queen's University Belfast, School of Mathematics and Physics\protect\\
University Road, Belfast BT7 1NN, Northern Ireland, UK\protect\\
  e-mail: {surulescu@mathematik.uni-kl.de}}
  
\maketitle
\begin{abstract}
 We study a PDE model for dynamics of susceptible-infected interactions.   The dispersal of susceptibles is via diffusion and repellent taxis as they move 
away from the increasing density of infected. 
The  diffusion of infected is a nonlinear, possibly degenerating term in nondivergence form. 
% 
% \noindent
We prove the existence of so-called weak-strong solutions in 1D  for a positive susceptible initial population. For dimension $N\ge 2$ and nonnegative susceptible initial density we show 
the existence of supersolutions. Numerical simulations are performed for different scenarios and illustrate the space-time behaviour of solutions.
\\\\
{\bf Keywords}: epidemic model, nonlinear diffusion with interpopulation contact, repellent taxis, non-divergence form, weak solution, weak supersolution
\\
MSC 2010: 35Q92, 35K55, 92D30

\end{abstract}

\section{Introduction}\label{sec:intro}

\noindent
There exists by now a vast literature about models of epidemic spread, most of which take the form of ODE systems. The heterogeneity of space, however, can play an essential role in the dynamics 
of an infectious disease, as the environmental conditions can differ from one site to the other and moreover the individuals in the populations can move in space. Therefore various settings accounting for 
both space and time variability have been proposed, among the first being the contact model with diffusion in \cite{kendall}, followed by \cite{Hadeler1975,Hadeler1997,Hosono1995,Kaellen-AM} and many others, mainly performing traveling wave analysis. \footnote{In this context we are interested only in reaction-diffusion models; population balance models accounting for further structures and featuring integro-differential PDEs (see e.g. \cite{magal} and the references therein) are not addressed here.} During the last two decades the investigation of well-posedness and qualitative properties of solutions to reaction-diffusion PDE systems describing epidemics has attracted increasing interest, see e.g. \cite{allen,deng,huang,wang-lw,wu}; we also refer to \cite{fitzgibb,webb} for earlier works. Some of the more recent models \cite{cui16,cui17,jiang,kuto} were extended to account for at least one of the interacting populations having a motility bias in a certain direction (e.g., due to environmental influences like fluid or air flow), which leads to a drift term supplementing the diffusion and the source/decay terms. 

\noindent
A model with linear cross-diffusion of susceptibles has been considered in \cite{Sun}, while \cite{berres} proposed a numerical scheme to handle a nonlinear one.  Such terms are included into the  epidemic model in order to account  for the response of an active population of susceptibles toward the other population's degree of infectiveness. The former would typically try to avoid the latter by biasing its movement in the direction opposite to the gradient of infected population density. How effective this avoidance is depends, of course, on the amount of susceptibles and infected in the system and their respective ratio: A small amount of infected in an overwhelming susceptible population will pass -at least for a while- unnoticed (\textit{carelessness}). In the other extreme, a large density of infected in a comparatively rather small population of susceptibles will lead to \textit{fatalism} (very low possibility of avoidance). At moderate densities and ratios the avoidance mechanism can be quite effective, leading to patterns and (local) phase separation, see the simulations in \cite{berres,Sun}. The corresponding advection term occurring in the PDE for the density of susceptibles can be interpreted as a repellent taxis term, in analogy with models of (chemo)repellence involving (chemo)taxis terms with a sign opposite to that usual for Keller-Segel type models.

\noindent
In this work we propose and investigate a reaction-diffusion model for infection spread with contact and with repellent taxis, which combines cross-diffusion (in the sense mentioned above) with the influence of contacts between susceptibles and infected on the self-diffusion of the latter. To our knowledge, previous epidemic models have either one or the other of these features, see \cite{kendall,postnikov,Sun}; the model in \cite{berres} involves self-diffusion of infected with self-contact. The mentioned studies in \cite{postnikov,Sun,berres} mostly focus upon computational aspects. They do not include any proofs of existence of solutions to such settings. Here we aim at addressing this analytical issue for the announced model extension.

\noindent
The paper is structured as follows: {\it Section \ref{sec:model}} specifies the model to be investigated, along with the mathematical challenges arising from its structure, and introduces a family of regularized problems approximating the actual PDE system of interest. {\it Section \ref{WPRepDege}} is dedicated to the analysis of this regularized problem, followed in {\it Section \ref{Ex1D}} by the existence proof for solutions to the full model in 1D and with positive initial density of susceptibles. {\it Section \ref{Gensuper}} contains the existence proof for supersolutions of our model in space dimension $N\ge 2$ and for nonnegative initial density of susceptibles. Finally, {\it Section \ref{sec:nums}} provides some numerical simulations to illustrate the behavior of the model for a few different scenarios of the epidemics. Some concluding remarks are provided as well.

\section{Model setup}\label{sec:model}

\noindent
We consider the following model for the evolution of susceptibles ($S$) and infected ($I$): 
\begin{subequations}\label{RepDeg}
 \begin{empheq}[left={ \empheqlbrace\ }]{align}
  &\partial_t S=\nabla\cdot\left(\nabla S+\chi(S)S\nabla I\right)+f(S,I)&&\text{ in }\R^+\times\Omega,\label{EqS}\\
  &\partial_t I=S\Delta I+g(S,I)&&\text{ in }\R^+\times\Omega,\label{EqI}\\
  &\partial_{\nu} S=\partial_{\nu} I=0&&\text{ in }\R^+\times\partial\Omega,\label{bc}\\
  &S(0,\cdot)=S_0,\qquad I(0,\cdot)=I_0&&\text{ in }\Omega.
 \end{empheq}
\end{subequations}
Here $\Omega$ is a bounded domain in $\R^N$, $N\in\N$, with the corresponding outer normal unit vector $\nu$ on $\partial\Omega$. 

\noindent
The equations describe the interactions of the two populations, whereby $S$ performs linear diffusion and repellent taxis in the sense mentioned in {\it Section \ref{sec:intro}}: Susceptibles tend to avoid the  infected. The efficiency of avoidance is characterized by the function $\chi $ which in analogy to chemotaxis models will be called in the following \textit{tactic sensitivity}. It may in fact depend both on $S$ and $I$, however for our analysis we choose it in the form 
\begin{align}
 &\chi(S)=K(1-S),\nonumber
\end{align}
which accounts for the crowding effect: amidst a large mass of susceptibles their awareness of infectives is reduced ('drowned'). % in the mass of susceptibles.
In particular, the threshold value for $S$ corresponding to a tight packing state is assumed to be normalised to $S_{max}=1$.

\noindent
The first term on the right hand side of \eqref{EqI} describes self-diffusion with interpopulation contact, similarly to the model in \cite{kendall}.  
% {\cmg It comes from the model in \cite{postnikov}}. {\cb Weil ich mit der Begr\"undung in \cite{postnikov} nicht ganz einverstanden bin w\"urde ich lieber bei dem blauen Text (vor Deinem rosanen) bleiben. Auch scheint das Kendall'sche Paper bekannter zu sein, das andere hingegen ist etwas obskur -auch mathematisch.} %, like in \cite{kendall}.
%It can be formally obtained by starting from a lattice based description of infections upon contacts between infected and susceptibles, writing the corresponding master equations and passing to the limit with the time and space step sizes. { Ich bin mir nicht sicher, ob das tats\"achlich so ist.}
%\noindent
Further, we modify the interaction terms having the usual form $SI$ to account for an infectiveness threshold with a limitation given by the total population. Thus, following e.g., \cite{Sun} or \cite{allen} we choose 
\begin{equation*}%\label{eq_fg}
f(S,I)=-\lambda_S\frac{SI}{S+I}+\mu_SS(1-S),\qquad g(S,I)=\lambda_I\frac{SI}{S+I}-\mu_II,
\end{equation*}
where the second terms in these expressions describe as usual logistic growth of susceptibles and linear removal of infected, respectively. \footnote{The function $f_1(S,I)=\frac{SI}{S+I}$ is Lipschitz with respect to $S$ and $I$ in the open first quadrant, hence its definition can be extended to the closure of that set by letting it be zero when either $S=0$ or $I=0$.} 
\noindent
The removed (including dead and recovered) population can be described by 
\begin{equation*}
	\partial_tR=\mu _II.
\end{equation*}
This equation is decoupled from \eqref{RepDeg}, thus not contributing to the dynamics of $(S,I)$ and will therefore be ignored in the following.

\paragraph{Analytical challenges.} System \eqref{RepDeg} combines several effects which jointly make the analysis challenging:
\begin{enumerate}[(i)]
 \item \eqref{EqS} is in divergence form, while \eqref{EqI} is not, and they are strongly coupled;
 \item equation \eqref{EqS} for $S$ includes a potentially destabilising chemotaxis transport term, in this case in the direction opposite to $\nabla I$;
 \item equation \eqref{EqI} for $I$ features a non-standard degeneracy occurring on the zero level set of the variable $S$.  
\end{enumerate}
% Observe that since the model accounts for the volume filling effect,
System \eqref{RepDeg} can be seen as a {\it formal} limit as $\varepsilon\rightarrow 0$ of the following family of regularised problems:
\begin{subequations}\label{RepDege}
 \begin{empheq}[left={ \empheqlbrace\ }]{align}
  &\partial_t S_{\varepsilon}=\nabla\cdot\left(\nabla S_{\varepsilon}+\chi(S_{\varepsilon})S_{\varepsilon}\nabla I_{\varepsilon}\right)+f(S_{\varepsilon},I_{\varepsilon})&&\text{ in }\R^+\times\Omega,\label{EqSe}\\
  &\partial_t I_{\varepsilon}=(\varepsilon+S_{\varepsilon})\Delta I_{\varepsilon}+g(S_{\varepsilon},I_{\varepsilon})&&\text{ in }\R^+\times\Omega,\label{EqIe}\\
  &\partial_{\nu} S_{\varepsilon}=\partial_{\nu} I_{\varepsilon}=0&&\text{ in }\R^+\times\partial\Omega,\label{bce}\\
  &S_{\varepsilon}(0,\cdot)=S_0,\qquad I_{\varepsilon}(0,\cdot)=I_0&&\text{ in }\Omega.
 \end{empheq}
\end{subequations}
Thereby a small number $\varepsilon\in(0,1]$ added to the diffusion coefficient of $I$ eliminates the degeneracy issue. Standard tools can be used (see {\it Section \ref{WPRepDege}} below) in order to prove the existence of solutions for \eqref{RepDege}.  
This observation naturally leads to an attempt to apply the standard compactness method which is based on  establishing uniform w.r.t. $\varepsilon$ a priori estimates for $S_{\varepsilon}$, $I_{\varepsilon}$, and their derivatives in suitable Bochner spaces and utilising some known compact embeddings and other necessary results in order to prove the existence of a sequence which converges to  some weak solution of the non-perturbed system. In general, however, owing to the sort of degeneracy present in the original system, it seems difficult, if not impossible,  to get a priori bounds which would allow to pass rigorously to the limit in the term $S_{\varepsilon}\Delta I_{\varepsilon}$ in order to regain $S\Delta I$. 

\noindent
In order to obtain our existence results, we assume that
\begin{subequations}\label{AssIni}
\begin{alignat}{3}
 &S_0\in L^{\infty}(\Omega),&&\qquad 0\leq S_0\leq 1,\\
 &I_0\in W^{1,\infty}(\Omega),&&\qquad I_0\geq0. 
\end{alignat}
\end{subequations}
\noindent
In this work we consider first the special situation when 
\begin{align}
N=1 \qquad\text{and}\qquad \inf_{\Omega}S_0\in(0,1]\nonumber
\end{align}
 and prove in {\it Section \ref{Ex1D}} that under these assumptions a solution does exist. 
 In {\it Section \ref{Gensuper}} we then turn to the general case of an arbitrary space dimension and without a positive lower bound for $S_0$. In this case we are able to establish the existence of a weak  supersolution (see {\it Definition \ref{DefGenSuperSol}} below). 

\begin{Remark}[Notation]
Throughout the paper we make the following useful conventions:
\begin{enumerate}
 \item For any index $i$, a quantity $C_i$ denotes a positive constant or function;
 \item Dependence upon such parameters as: the space dimension $N$, domain 
$\Omega$, constants $K,\lambda_I,\lambda_S,\mu_I,\mu_S$, and the norms 
of the initial data $S_0$ and $I_0$ is mostly {\bf not}
indicated in an explicit way;
\item We assume the reader to be familiar with the conventional Lebesgue, Sobolev, and Bochner spaces and standard results concerning them. We denote $\left<\cdot,\cdot\right>$ the duality paring between $H^1(\Omega)$ and its dual $(H^1(\Omega))'$.
\end{enumerate}
\end{Remark}
\section{Analysis of the  regularized system \texorpdfstring{\eqref{RepDege}}{}}\label{WPRepDege}
\subsection{A priori estimates and compactness}
To begin with, we establish several necessary a priori estimates for \eqref{RepDege}. 
\begin{Lemma}\label{Lembnd} Let a pair of measurable functions $(S_{\varepsilon},I_{\varepsilon}):\R_0^+\times \overline{\Omega}\rightarrow[0,1]\times \R_0^+$ be a sufficiently regular solution to system \eqref{RepDege}. Then it satisfies the following estimates:
\begin{align}
 &\|\nabla I_{\varepsilon}\|_{L^{\infty}(\R^+;L^2(\Omega))}\leq \C,\label{estnabIe}\\
 &\|I_{\varepsilon}\|_{L^2(0,T;L^2(\Omega))}\leq \C(T),\label{estIe}\\
 &\sqrt{\varepsilon}\left\|\Delta I_{\varepsilon}\right\|_{L^2(0,T;L^2(\Omega))}+\left\|\sqrt{S_{\varepsilon}}\Delta I_{\varepsilon}\right\|_{L^2(0,T;L^2(\Omega))}\leq \C(T)\label{estDeltaIe}\\
 &\|\partial_t I_{\varepsilon}\|_{L^2(0,T;L^2(\Omega))}\leq \C(T),\label{estptIe}\\
 &\|\nabla S_{\varepsilon}\|_{L^2(0,T;L^2(\Omega))}\leq \C(T),\label{estnabSe}\\
 &\|\partial_t S_{\varepsilon}\|_{L^2(0,T;(H^1(\Omega))')}\leq \C(T)\label{estSet}
\end{align}
\end{Lemma}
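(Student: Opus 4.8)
Since the solution is assumed to take values in $[0,1]\times\R_0^+$, the nonlinearities are immediately under control: $0\le\chi(S_\varepsilon)S_\varepsilon=K(1-S_\varepsilon)S_\varepsilon\le K/4$, $|f(S_\varepsilon,I_\varepsilon)|\le\lambda_S+\tfrac{1}{4}\mu_S$, and $\left|\tfrac{S_\varepsilon I_\varepsilon}{S_\varepsilon+I_\varepsilon}\right|\le\min\{S_\varepsilon,I_\varepsilon\}$. The first step is to bound $I_\varepsilon$ from above: since $\tfrac{S_\varepsilon I_\varepsilon}{S_\varepsilon+I_\varepsilon}\le I_\varepsilon$ we have $g(S_\varepsilon,I_\varepsilon)\le(\lambda_I-\mu_I)I_\varepsilon$, so comparing \eqref{EqIe} with the spatially homogeneous ODE $y'=(\lambda_I-\mu_I)_+\,y$, $y(0)=\|I_0\|_{L^\infty(\Omega)}$ (which is legitimate because $g$ is globally Lipschitz in its second variable) yields $\|I_\varepsilon(t)\|_{L^\infty(\Omega)}\le C_1(t)$; since $|\Omega|<\infty$, this gives \eqref{estIe} at once.

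The heart of the lemma is \eqref{estnabIe}--\eqref{estDeltaIe}, which I would obtain by testing \eqref{EqIe} with $-\Delta I_\varepsilon$. Using the boundary condition \eqref{bce} and integrating by parts twice one arrives at
\[\tfrac{1}{2}\tfrac{d}{dt}\|\nabla I_\varepsilon\|_{L^2(\Omega)}^2+\int_\Omega(\varepsilon+S_\varepsilon)|\Delta I_\varepsilon|^2=-\lambda_I\int_\Omega\tfrac{S_\varepsilon I_\varepsilon}{S_\varepsilon+I_\varepsilon}\Delta I_\varepsilon-\mu_I\|\nabla I_\varepsilon\|_{L^2(\Omega)}^2.\]
The crucial point is that the reaction term matches the only quantity one can uniformly control, namely $\sqrt{\varepsilon+S_\varepsilon}\,\Delta I_\varepsilon$: because $S_\varepsilon\le1$ one has $\left|\tfrac{S_\varepsilon I_\varepsilon}{S_\varepsilon+I_\varepsilon}\right|\le S_\varepsilon\le\sqrt{S_\varepsilon}\le\sqrt{\varepsilon+S_\varepsilon}$, so Young's inequality bounds the first term on the right by $\tfrac{1}{2}\int_\Omega(\varepsilon+S_\varepsilon)|\Delta I_\varepsilon|^2+\tfrac{1}{2}\lambda_I^2|\Omega|$ and one half of the dissipation is absorbed. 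The remaining term $-\mu_I\|\nabla I_\varepsilon\|_{L^2(\Omega)}^2$ already has the right sign, and (using $\mu_I>0$) it is precisely this term that makes the resulting bound uniform in time: Gronwall's lemma applied to $\tfrac{d}{dt}\|\nabla I_\varepsilon\|_{L^2(\Omega)}^2+2\mu_I\|\nabla I_\varepsilon\|_{L^2(\Omega)}^2+\int_\Omega(\varepsilon+S_\varepsilon)|\Delta I_\varepsilon|^2\le\lambda_I^2|\Omega|$ gives $\|\nabla I_\varepsilon(t)\|_{L^2(\Omega)}^2\le\|\nabla I_0\|_{L^2(\Omega)}^2+\lambda_I^2|\Omega|/(2\mu_I)$ (the right-hand side being finite by \eqref{AssIni}), i.e. \eqref{estnabIe}, while integrating the same differential inequality over $(0,T)$ controls $\int_0^T\!\!\int_\Omega(\varepsilon+S_\varepsilon)|\Delta I_\varepsilon|^2$, hence both summands in \eqref{estDeltaIe}. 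I expect this test to be the only genuinely delicate step: one must keep the $-\mu_I I_\varepsilon$ contribution in the form $-\mu_I\|\nabla I_\varepsilon\|_{L^2(\Omega)}^2$ and resist estimating it as $\tfrac{\mu_I I_\varepsilon}{\sqrt{\varepsilon+S_\varepsilon}}\cdot\sqrt{\varepsilon+S_\varepsilon}\,\Delta I_\varepsilon$, which would produce a factor blowing up as $\varepsilon\to0$.

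The remaining three bounds are then routine. For \eqref{estptIe} I would read $\partial_tI_\varepsilon=(\varepsilon+S_\varepsilon)\Delta I_\varepsilon+g(S_\varepsilon,I_\varepsilon)$ off \eqref{EqIe} and use $\varepsilon+S_\varepsilon\le2$, so that $|(\varepsilon+S_\varepsilon)\Delta I_\varepsilon|\le\sqrt{2}\,\sqrt{\varepsilon+S_\varepsilon}\,|\Delta I_\varepsilon|$ is controlled via \eqref{estDeltaIe}, while $|g(S_\varepsilon,I_\varepsilon)|\le\lambda_I+\mu_I C_1(T)$ on $(0,T)$. For \eqref{estnabSe} I would test \eqref{EqSe} with $S_\varepsilon$: the diffusion produces $-\|\nabla S_\varepsilon\|_{L^2(\Omega)}^2$, the taxis term is estimated by $\tfrac{1}{4}\|\nabla S_\varepsilon\|_{L^2(\Omega)}^2+\tfrac{K^2}{16}\|\nabla I_\varepsilon\|_{L^2(\Omega)}^2$ via $\chi(S_\varepsilon)S_\varepsilon\le K/4$ and Young's inequality (with $\|\nabla I_\varepsilon\|_{L^2(\Omega)}$ already bounded by \eqref{estnabIe}), and $\int_\Omega f(S_\varepsilon,I_\varepsilon)S_\varepsilon$ is bounded by a constant; absorbing and integrating in time yields \eqref{estnabSe}. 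Finally, for \eqref{estSet} I would pair \eqref{EqSe} with an arbitrary $\varphi\in H^1(\Omega)$, move the divergence onto $\varphi$ (the boundary term vanishing since the whole flux has zero normal trace), and estimate $\left<\partial_tS_\varepsilon,\varphi\right>$ by $\left(\|\nabla S_\varepsilon\|_{L^2(\Omega)}+\tfrac{K}{4}\|\nabla I_\varepsilon\|_{L^2(\Omega)}+\|f(S_\varepsilon,I_\varepsilon)\|_{L^2(\Omega)}\right)\|\varphi\|_{H^1(\Omega)}$; squaring, integrating over $(0,T)$ and invoking \eqref{estnabSe} and \eqref{estnabIe} gives \eqref{estSet}. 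All these manipulations are formal but fully justified for the sufficiently regular solutions of the non-degenerate problem \eqref{RepDege}.
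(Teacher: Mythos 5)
Your argument is correct, and for the core estimates it coincides with the paper's: the decisive step — testing \eqref{EqIe} with $-\Delta I_{\varepsilon}$, exploiting $\bigl|\tfrac{S_{\varepsilon}I_{\varepsilon}}{S_{\varepsilon}+I_{\varepsilon}}\bigr|\leq S_{\varepsilon}\leq\sqrt{S_{\varepsilon}}$ so that the reaction term is absorbed into the dissipation $\int_{\Omega}(\varepsilon+S_{\varepsilon})|\Delta I_{\varepsilon}|^2\,dx$, keeping $-\mu_I\|\nabla I_{\varepsilon}\|^2_{L^2(\Omega)}$ intact, and applying Gronwall — is exactly the paper's derivation of \eqref{estnabIe} and \eqref{estDeltaIe}, and your treatment of \eqref{estptIe}, \eqref{estnabSe}, \eqref{estSet} likewise mirrors the paper. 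The one place where you take a genuinely different route is \eqref{estIe}: the paper tests \eqref{EqIe} with $I_{\varepsilon}$ and runs a second Gronwall argument using \eqref{estDeltaIe}, staying entirely within the energy framework, whereas you invoke the parabolic comparison principle against the ODE $y'=(\lambda_I-\mu_I)_+y$ to get an $L^{\infty}$ bound on $I_{\varepsilon}$ first. Your route yields a stronger conclusion (an $L^{\infty}$-in-space bound, which also slightly simplifies \eqref{estptIe}), and it is legitimate for the \emph{sufficiently regular} solutions of the uniformly parabolic regularised problem assumed in the lemma; its cost is that it relies on the maximum principle for the non-divergence operator $\partial_t-(\varepsilon+S_{\varepsilon})\Delta$ rather than on purely variational manipulations, so it would not survive verbatim at the level of the weak--strong solution concept used later, while the paper's $L^2$ test does. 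Either way the stated estimates follow, so the proposal is sound.
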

\begin{proof}
 Testing \eqref{EqIe} with $-\Delta I_{\varepsilon}$ and { using Young's inequality} %integrating over $\Omega$, by parts where necessary, 
 we obtain that
 \begin{align}
  \frac{1}{2}\frac{d}{dt}\|\nabla I_{\varepsilon}\|_{L^2(\Omega)}^2=&-\varepsilon\left\|\Delta I_{\varepsilon}\right\|_{L^2(\Omega)}^2-\left\|\sqrt{S_{\varepsilon}}\Delta I_{\varepsilon}\right\|_{L^2(\Omega)}^2-\int_{\Omega}g(S_{\varepsilon},I_{\varepsilon})\Delta I_{\varepsilon}\,dx\nonumber\\
  \leq&-\varepsilon\left\|\Delta I_{\varepsilon}\right\|_{L^2(\Omega)}^2-\left\|\sqrt{S_{\varepsilon}}\Delta I_{\varepsilon}\right\|_{L^2(\Omega)}^2-\mu_I\|\nabla I_{\varepsilon}\|_{L^2(\Omega)}^2+\C\left\|\sqrt{S_{\varepsilon}}\Delta I_{\varepsilon}\right\|_{L^2(\Omega)}\nonumber\\
  \leq&-\varepsilon\left\|\Delta I_{\varepsilon}\right\|_{L^2(\Omega)}^2-\frac{1}{2}\left\|\sqrt{S_{\varepsilon}}\Delta I_{\varepsilon}\right\|_{L^2(\Omega)}^2-\mu_I\|\nabla I_{\varepsilon}\|_{L^2(\Omega)}^2+\C.\label{est1}
 \end{align}
Using the Gronwall lemma { and integrating with respect to time when necessary} we conclude from \eqref{est1} that estimates \eqref{estnabIe} and \eqref{estDeltaIe} hold. 
% Further, \eqref{estnabIe}-\eqref{estDeltaIe} also imply \eqref{estptIe}. 
Testing \eqref{EqIe} with $I_{\varepsilon}$ and integrating over $\Omega$ we obtain with the Cauchy-Schwarz and Young inequalities that
\begin{align}
 &\frac{1}{2}\frac{d}{dt}\|I_{\varepsilon}\|_{L^2(\Omega)}^2\nonumber\\=&\int_{\Omega}\left(I_{\varepsilon}(\varepsilon+S_{\varepsilon})\Delta I_{\varepsilon}+I_{\varepsilon}g(S_{\varepsilon},I_{\varepsilon})\right) \,dx\nonumber\\
 \leq& \C\left(\sqrt{\varepsilon}\left\|\Delta I_{\varepsilon}\right\|_{L^2(\Omega)}+\left\|\sqrt{S_{\varepsilon}}\Delta I_{\varepsilon}\right\|_{L^2(\Omega)}\right)\|I_{\varepsilon}\|_{L^2(\Omega)}+\C\|I_{\varepsilon}\|_{L^2(\Omega)}-\mu_I\|I_{\varepsilon}\|_{L^2(\Omega)}^2\nonumber\\
 \leq& \C\left(\varepsilon\left\|\Delta I_{\varepsilon}\right\|_{L^2(\Omega)}^2+\left\|\sqrt{S_{\varepsilon}}\Delta I_{\varepsilon}\right\|_{L^2(\Omega)}^2+1\right)-\frac{\mu_I}{2}\|I_{\varepsilon}\|_{L^2(\Omega)}^2.\label{estI2}
\end{align}
Combining the Gronwall lemma with  \eqref{estDeltaIe} and \eqref{estI2} we obtain \eqref{estIe}. Altogether, estimates \eqref{estnabIe}-\eqref{estDeltaIe} allow to estimate the right-hand side of { \eqref{EqIe}}  yielding \eqref{estptIe}.

\noindent
Next, we test \eqref{EqSe} with $S_{\varepsilon}$ and integrate over $\Omega$, by parts where necessary. We thus obtain { by} using the Cauchy-Schwarz and Young inequalities and { the} estimate \eqref{estnabIe} that
\begin{align}
 \frac{1}{2}\frac{d}{dt}\|S_{\varepsilon}\|_{L^2(\Omega)}^2=&-\|\nabla S_{\varepsilon}\|_{L^2(\Omega)}^2+\int_{\Omega}-\chi(S_{\varepsilon})S_{\varepsilon}\nabla S_{\varepsilon}\cdot \nabla I_{\varepsilon}+S_{\varepsilon}f(S_{\varepsilon},I_{\varepsilon})\,dx\nonumber\\
 \leq&-\frac{1}{2}\|\nabla S_{\varepsilon}\|_{L^2(\Omega)}^2+\C.\label{est2}
\end{align}
Integrating \eqref{est2} over $(0,T)$ we obtain  \eqref{estnabSe}. Finally, \eqref{estnabIe} and \eqref{estnabSe} allow to estimate the right-hand side of \eqref{EqSe} yielding \eqref{estSet}. 
\end{proof}
\begin{Lemma}\label{LemConv}
 Let for each $\varepsilon\in(0,1]$  { the} pair   $(S_{\varepsilon},I_{\varepsilon}):\R_0^+\times \overline{\Omega}\rightarrow[0,1]\times \R_0^+$ be a sufficiently regular solution to system \eqref{RepDege}. Then there exists a sequence $\varepsilon_n\rightarrow 0$ and  a pair $(S,I)$ such that for any $T>0$
\begin{alignat}{3}
 &\nabla I_{\varepsilon_n}\underset{n\rightarrow\infty}{\overset{*}{\rightharpoonup}}\nabla I&&\qquad \text{in }L^{\infty}(0,T;L^2(\Omega)),\label{convnabIe}\\
 &\varepsilon_n\Delta I_{\varepsilon_n}\underset{n\rightarrow\infty}{\rightarrow}0&&\qquad \text{in }L^2(0,T;L^2(\Omega)),\\
%  &\Delta I_{\varepsilon_n}\underset{n\rightarrow\infty}{\rightharpoonup}\Delta I&&\qquad \text{in }L^2(0,T;L^2(\Omega)),\\
 &\partial_t I_{\varepsilon_n}\underset{n\rightarrow\infty}{\rightharpoonup}\partial_t I&&\qquad \text{in } L^2(0,T;L^2(\Omega)),\\
 &I_{\varepsilon_n}\underset{n\rightarrow\infty}{\rightarrow}I&&\qquad \text{in }L^2(0,T;L^2(\Omega))\text{ and a.e.},\\
 &\nabla S_{\varepsilon_n}\underset{n\rightarrow\infty}{\rightharpoonup}\nabla S&&\qquad \text{in }L^2(0,T;L^2(\Omega)),\\
 &\partial_t S_{\varepsilon_n}\underset{n\rightarrow\infty}{\rightharpoonup} \partial_t S &&\qquad \text{in }L^2(0,T;(H^1(\Omega))'),\\
 &S_{\varepsilon_n}\underset{n\rightarrow\infty}{\rightarrow}S&&\qquad \text{in }L^2(0,T;L^2(\Omega))\text{ and a.e.}\label{convSet}
\end{alignat}
\end{Lemma}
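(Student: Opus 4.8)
The plan is to combine classical weak and weak-$*$ compactness with the Aubin--Lions--Simon lemma, feeding in the uniform-in-$\varepsilon$ bounds from Lemma~\ref{Lembnd}, and then to pass from a fixed finite time horizon to all of $\R^+$ by a diagonal extraction. Fix $T>0$ first. The bound \eqref{estnabIe}, together with the sequential Banach--Alaoglu theorem (note that $L^{\infty}(0,T;L^2(\Omega))$ is the dual of the separable space $L^1(0,T;L^2(\Omega))$, since $L^2(\Omega)$ is reflexive and separable), yields a subsequence along which $\nabla I_{\varepsilon_n}$ converges weakly-$*$ in $L^{\infty}(0,T;L^2(\Omega))$; in the same way \eqref{estptIe}, \eqref{estnabSe} and \eqref{estSet} provide weakly convergent subsequences of $\partial_t I_{\varepsilon_n}$ in $L^2(0,T;L^2(\Omega))$, of $\nabla S_{\varepsilon_n}$ in $L^2(0,T;L^2(\Omega))$, and of $\partial_t S_{\varepsilon_n}$ in $L^2(0,T;(H^1(\Omega))')$. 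For the term $\varepsilon_n\Delta I_{\varepsilon_n}$ no compactness is required: by \eqref{estDeltaIe} one has $\|\varepsilon_n\Delta I_{\varepsilon_n}\|_{L^2(0,T;L^2(\Omega))}\le\sqrt{\varepsilon_n}\,C(T)\to0$, so it converges strongly to $0$.

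To obtain the strong and a.e.\ convergences of $I_{\varepsilon_n}$ and $S_{\varepsilon_n}$ I would invoke the Aubin--Lions--Simon lemma. For $I$: estimates \eqref{estnabIe} and \eqref{estIe} make $(I_\varepsilon)_\varepsilon$ bounded in $L^2(0,T;H^1(\Omega))$, while \eqref{estptIe} bounds $(\partial_t I_\varepsilon)_\varepsilon$ in $L^2(0,T;L^2(\Omega))$ (in fact, combining the latter two, $(I_\varepsilon)_\varepsilon$ is even bounded in $H^1(0,T;L^2(\Omega))\hookrightarrow C([0,T];L^2(\Omega))$, hence in $L^{\infty}(0,T;H^1(\Omega))$); since $H^1(\Omega)\hookrightarrow\hookrightarrow L^2(\Omega)$ for the bounded domain $\Omega$, this gives relative compactness of $(I_\varepsilon)_\varepsilon$ in $L^2(0,T;L^2(\Omega))$, and thus a subsequence with $I_{\varepsilon_n}\to I$ in $L^2(0,T;L^2(\Omega))$ and, after a further extraction, a.e. For $S$: the pointwise bound $0\le S_\varepsilon\le1$ together with \eqref{estnabSe} makes $(S_\varepsilon)_\varepsilon$ bounded in $L^2(0,T;H^1(\Omega))$, and \eqref{estSet} bounds $(\partial_t S_\varepsilon)_\varepsilon$ in $L^2(0,T;(H^1(\Omega))')$; Aubin--Lions--Simon with $H^1(\Omega)\hookrightarrow\hookrightarrow L^2(\Omega)\hookrightarrow(H^1(\Omega))'$ then yields $S_{\varepsilon_n}\to S$ in $L^2(0,T;L^2(\Omega))$ and a.e.

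It remains to identify the weak limits and to remove the dependence on $T$. The strong $L^2$-convergence of $I_{\varepsilon_n}$ and $S_{\varepsilon_n}$ implies convergence in $\mathcal D'((0,T)\times\Omega)$, so the weak(-$*$) limits of $\nabla I_{\varepsilon_n}$, $\partial_t I_{\varepsilon_n}$, $\nabla S_{\varepsilon_n}$, $\partial_t S_{\varepsilon_n}$ must coincide with the distributional derivatives $\nabla I$, $\partial_t I$, $\nabla S$, $\partial_t S$ of the strong limits; uniqueness of distributional limits then pins everything down. Carrying out the construction successively on $T=1,2,3,\dots$ and passing to a diagonal subsequence produces one sequence $\varepsilon_n\to0$ and one pair $(S,I)$ for which all the stated convergences hold on every bounded time interval, and the a.e.\ limits automatically satisfy $0\le S\le1$ and $I\ge0$. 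There is no genuine obstacle in this lemma: the only points needing a word of care — and they are mild — are the separability of the predual $L^1(0,T;L^2(\Omega))$ (so that the unit ball of $L^{\infty}(0,T;L^2(\Omega))$ is sequentially weak-$*$ compact) and the compatibility of the diagonal extraction across the nested intervals; the substantive difficulty of the paper lies downstream, in passing to the limit in the product $S_{\varepsilon}\Delta I_{\varepsilon}$, not here.
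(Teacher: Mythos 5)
Your argument is correct and is essentially the paper's own proof, which simply cites the uniform estimates \eqref{estnabIe}--\eqref{estSet} together with the Lions--Aubin lemma and the Banach--Alaoglu theorem; you have merely spelled out the details (separability of the predual, identification of limits via distributional convergence, and the diagonal extraction over nested time intervals), all of which are sound.
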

\begin{proof}
 This  is  a direct consequence of  uniform estimates \eqref{estnabIe}-\eqref{estSet} combined with the Lions-Aubin lemma and { the} Banach-Alaoglu theorem.
\end{proof}
\subsection{Existence of solutions}
Since system \eqref{RepDege} couples two parabolic equations, one of which is in divergence form, while { the other} is not, the standard theory, e.g. from \cite{LSU} or \cite{Amann1}, seems not to be directly applicable. Still, existence of solutions can be obtained { by} using the well-established procedure based on the Schauder fixed point theorem. For the convenience of the reader, we state the corresponding existence result and sketch its proof.
We choose the following { notion} of a solution: 
\begin{Definition}[Weak-strong solution]\label{DefSole} We call a pair of measurable functions $(S_{\varepsilon},I_{\varepsilon}):\R_0^+\times \overline{\Omega}\rightarrow[0,1]\times \R_0^+$ a \underline{weak-strong solution} to system \eqref{RepDege} if: 
\begin{enumerate} 
  \item $\nabla S_{\varepsilon}\in L^2_{loc}(\R_0^+;L^2(\Omega))$, $\partial_t S_{\varepsilon}\in L^2_{loc}(\R_0^+;(H^1(\Omega))')$;
  \item $I_{\varepsilon}\in L^2_{loc}(\R_0^+;H^2(\Omega))$, $\nabla I_{\varepsilon}\in L^{\infty}(\R_0^+;L^2(\Omega))$, $\partial_t I_{\varepsilon}\in L^2_{loc}(\R_0^+;L^2(\Omega))$;
 \item  the pair $(S_{\varepsilon},I_{\varepsilon})$ is a weak solution to  \eqref{EqS} and a  strong solution to \eqref{EqI}, i.e., 
 for all $\varphi\in H^1(\Omega)$ it holds that
 \begin{subequations}\label{DefWStre}
 \begin{align}
  &\left<\partial_t S_{\varepsilon},\varphi\right>=-\int_{\Omega}(\nabla S_{\varepsilon}+\chi(S_{\varepsilon})S_{\varepsilon}\nabla I_{\varepsilon})\cdot\nabla\varphi\,dx+{ \int_{\Omega}} f(S_{\varepsilon},I_{\varepsilon})\varphi\,dx\qquad\text{a.e. in }\R_0^+,\label{EqSweake}\\
 &\partial_t I_{\varepsilon} =S_{\varepsilon}\Delta I_{\varepsilon} +g(S_{\varepsilon},I_{\varepsilon})\qquad\text{a.e. in }\R_0^+\times\Omega,\label{EqIstronge}\\
 &\partial_{\nu}I_{\varepsilon}=0\qquad\text{a.e. in }\R_0^+\times\partial\Omega,\label{bcIe}\\
 &S_{\varepsilon}(0,\cdot)=S_0,\qquad I_{\varepsilon}(0,\cdot)=I_0\qquad\text{a.e. in }\Omega.\label{inie}
\end{align}
\end{subequations}
\end{enumerate}
\end{Definition}
\begin{Theorem}[Existence of a weak-strong solution] 
 Let \eqref{AssIni} hold. Then, system \eqref{RepDege} possesses a weak-strong solution in terms of {\it Definition \ref{DefSole}}.
\end{Theorem}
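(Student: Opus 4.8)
The plan is to prove existence of a weak-strong solution to the regularized system \eqref{RepDege} via a Schauder fixed-point argument, as announced. Fix $T>0$ and work on the cylinder $Q_T:=(0,T)\times\Omega$. The idea is to decouple the two equations by freezing the nonlinear coefficients. Concretely, I would first truncate $f$ and $g$ outside $[0,1]\times\R_0^+$ so that all source terms become globally bounded and Lipschitz, and then set up a map $\Phi$ on a suitable closed convex subset $\mathcal{K}$ of, say, $L^2(Q_T)\times L^2(Q_T)$. Given $(\bar S,\bar I)\in\mathcal{K}$, one solves the \emph{linear} parabolic equation
\begin{align}
\partial_t I=(\varepsilon+\bar S^+)\Delta I+g(\bar S,\bar I)\qquad\text{in }Q_T,\nonumber
\end{align}
with homogeneous Neumann data and initial value $I_0$, where $\bar S^+$ is a cut-off of $\bar S$ to $[0,1]$; since the diffusion coefficient is bounded below by $\varepsilon>0$ and measurable, standard linear parabolic $L^2$-theory (e.g.\ \cite{LSU}) yields a unique solution $I$ with $I\in L^2(0,T;H^2(\Omega))\cap H^1(0,T;L^2(\Omega))$. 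Feeding this $I$ into the $S$-equation, one then solves the linear divergence-form problem
\begin{align}
\partial_t S=\nabla\cdot(\nabla S+\chi(\bar S^+)\bar S^+\nabla I)+f(\bar S,\bar I)\qquad\text{in }Q_T,\nonumber
\end{align}
again with Neumann data and initial value $S_0$, obtaining $S\in L^2(0,T;H^1(\Omega))$ with $\partial_t S\in L^2(0,T;(H^1(\Omega))')$ by the Lax--Milgram/Galerkin method for linear parabolic equations. Setting $\Phi(\bar S,\bar I):=(S,I)$ defines the fixed-point map.

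Next I would verify the hypotheses of Schauder's theorem. Boundedness: the a priori estimates needed here are the linear-theory analogues of Lemma \ref{Lembnd}, which give a bound on $\|(S,I)\|$ in $\bigl(L^2(0,T;H^1(\Omega))\cap H^1(0,T;(H^1(\Omega))')\bigr)\times\bigl(L^2(0,T;H^2(\Omega))\cap H^1(0,T;L^2(\Omega))\bigr)$ depending only on $\varepsilon$, $T$, the (now bounded) truncated nonlinearities, and the data — hence $\Phi$ maps a large enough ball $\mathcal{K}$ into itself. Compactness: by the Lions--Aubin lemma this image is precompact in $L^2(Q_T)\times L^2(Q_T)$, so $\Phi(\mathcal{K})$ is relatively compact in the ambient space. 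Continuity: if $(\bar S_k,\bar I_k)\to(\bar S,\bar I)$ in $L^2(Q_T)^2$, then along a subsequence a.e.\ convergence holds, the truncated coefficients $\chi(\bar S_k^+)\bar S_k^+$, $g(\bar S_k,\bar I_k)$, $f(\bar S_k,\bar I_k)$ converge boundedly a.e.\ hence strongly in $L^2$, and passing to the limit in the two linear problems (using the uniform bounds and uniqueness of the linear solutions to rule out subsequence-dependence) gives $\Phi(\bar S_k,\bar I_k)\to\Phi(\bar S,\bar I)$. Schauder then yields a fixed point $(S_\varepsilon,I_\varepsilon)$.

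It then remains to remove the truncations, i.e.\ to show $0\le S_\varepsilon\le 1$ and $I_\varepsilon\ge 0$, so that the cut-offs act trivially and $(S_\varepsilon,I_\varepsilon)$ solves the original \eqref{RepDege}. Nonnegativity of $I_\varepsilon$ follows from a comparison/maximum principle for the linear parabolic equation satisfied by $I_\varepsilon$, using $g(S,I)|_{I=0}=0$ (note $g(S,I)=\lambda_I SI/(S+I)-\mu_I I$ has the quasipositivity structure $g(S,0)=0$) and $I_0\ge 0$; formally, testing with $I_\varepsilon^-$ and Gronwall. Nonnegativity of $S_\varepsilon$ is analogous, testing \eqref{EqSweake} with $S_\varepsilon^-$ and using $f(0,I)=0$ together with $\nabla S_\varepsilon\cdot\nabla\varphi$ giving the right sign on $\{S_\varepsilon<0\}$ (the taxis term carries the factor $S_\varepsilon$, which vanishes where $S_\varepsilon^-$ is tested appropriately, or is controlled since $\chi(S_\varepsilon)S_\varepsilon$ is bounded). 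The upper bound $S_\varepsilon\le 1$ is obtained by testing with $(S_\varepsilon-1)^+$: on $\{S_\varepsilon>1\}$ one has $\chi(S_\varepsilon)S_\varepsilon=K(1-S_\varepsilon)S_\varepsilon\le 0$ so the taxis contribution has a favorable sign or is absorbed, and $f(S,I)=-\lambda_S SI/(S+I)+\mu_S S(1-S)\le 0$ for $S\ge 1$, whence $(S_\varepsilon-1)^+\equiv 0$ by Gronwall. Finally, higher regularity $I_\varepsilon\in L^2_{loc}(\R_0^+;H^2(\Omega))$ and the time regularity of $\partial_t S_\varepsilon$, $\partial_t I_\varepsilon$ follow once the coefficient $\varepsilon+S_\varepsilon$ is known to be bounded in $[\varepsilon,1+\varepsilon]$, by bootstrapping the linear elliptic/parabolic estimates, and the construction on $[0,T]$ for every $T$ patches together to a solution on $\R_0^+$.

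The main obstacle is the \emph{continuity} of the fixed-point map together with the removal of truncations: because the two equations are of different type (divergence vs.\ non-divergence) and strongly coupled through the coefficient $\varepsilon+S_\varepsilon$ multiplying $\Delta I_\varepsilon$, one must be careful that the a priori bounds used in the Schauder step are genuinely $\varepsilon$-fixed linear estimates (they are, since $\varepsilon+\bar S^+\ge\varepsilon$), and that the sign-testing arguments for the box constraint $0\le S_\varepsilon\le1$ correctly exploit the specific structure of $\chi$ and $f$ — in particular that $\chi(1)=0$ kills the cross-diffusion flux exactly at the upper barrier. Everything else is routine linear parabolic theory and standard compactness, so I would keep those parts brief and refer to \cite{LSU,Amann1}.
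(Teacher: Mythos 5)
Your overall strategy --- decouple the system, solve the subproblems for frozen data, and apply the Schauder fixed point theorem in $L^2$ with compactness supplied by the a priori estimates and the Lions--Aubin lemma --- is the same as the paper's. The differences are in the execution: the paper freezes only $\bar S_{\varepsilon}$, keeps $g(\bar S_{\varepsilon},I_{\varepsilon})$ semilinear in $I_{\varepsilon}$ and the $S$-equation quasilinear in $S_{\varepsilon}$ (which is why it must prove uniqueness of the $S$-subproblem, done there via an $L^1$-contraction argument with the test functions $\sign_{\delta}(U)$), and it builds the constraint $0\leq S\leq1$ into the fixed-point set $\mathcal{M}$; you linearize both subproblems completely by freezing $(\bar S,\bar I)$, trading the nonlinear uniqueness proof for a truncation-removal step at the fixed point. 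Your sign-testing arguments for the $S$-component (with $S_{\varepsilon}^-$ and $(S_{\varepsilon}-1)^+$, exploiting $\chi(1)=0$, $f(0,\cdot)=0$ and $f(1,\cdot)\leq0$) are sound, because that equation is in divergence form.

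The genuine gap is in your treatment of the $I$-equation, precisely because it is \emph{not} in divergence form. First, ``standard linear parabolic $L^2$-theory (e.g.\ \cite{LSU})'' does not directly give $L^2(0,T;H^2(\Omega))\cap H^1(0,T;L^2(\Omega))$-solvability of $\partial_t I=(\varepsilon+\bar S^+)\Delta I+G$ when the leading coefficient $\varepsilon+\bar S^+$ is merely bounded and measurable: the non-divergence-form results in \cite{LSU} require continuous (H\"older) leading coefficients. Solvability can be recovered from the energy estimate obtained by testing with $-\Delta I$ (as in \emph{Lemma \ref{Lembnd}}) combined with an approximation of $\bar S$ by smooth functions --- which is exactly the detour the paper takes. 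Second, your positivity argument for $I_{\varepsilon}$ (``formally, testing with $I_{\varepsilon}^-$ and Gronwall'') does not close at the available regularity: integrating $\int_{\Omega}(\varepsilon+S_{\varepsilon})\Delta I_{\varepsilon}\,I_{\varepsilon}^-\,dx$ by parts produces the term $\int_{\Omega}I_{\varepsilon}^-\,\nabla I_{\varepsilon}\cdot\nabla S_{\varepsilon}\,dx$, which is not controlled by $\|I_{\varepsilon}^-\|_{L^2(\Omega)}^2$ plus the dissipation when $\nabla S_{\varepsilon}$ is only in $L^2(\Omega)$; without integrating by parts the term has no sign, and the $W^{2,1}_p$ maximum principles for non-divergence operators require $p\geq N+1$, which $p=2$ does not supply for $N\geq2$. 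The clean fix is the paper's: obtain classical solutions for smooth $\bar S_{\varepsilon}$ and smooth data, where the classical maximum principle applies directly to the non-divergence operator (using $g(\bar S,0)=0$), and transfer the bounds $0\leq S_{\varepsilon}\leq1$, $I_{\varepsilon}\geq0$ to the limit of the approximation. With that smoothing step inserted, your argument goes through.
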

\begin{proof} ({\it Sketch}) To begin with, we decouple the equations for the two components:
\begin{subequations}\label{Ie}
 \begin{empheq}[left={ \empheqlbrace\ }]{align}
  &\partial_t I_{\varepsilon}=(\varepsilon+\bar{S}_{\varepsilon})\Delta I_{\varepsilon}+g(\bar{S}_{\varepsilon},I_{\varepsilon})&&\text{ in }\R^+\times\Omega,\\
  &\partial_{\nu} I_{\varepsilon}=0&&\text{ in }\R^+\times\partial\Omega,\\
  &I_{\varepsilon}(0,\cdot)=I_0&&\text{ in }\Omega
 \end{empheq}
\end{subequations}
 and 
 \begin{subequations}\label{Se}
 \begin{empheq}[left={ \empheqlbrace\ }]{align}
  &\partial_t S_{\varepsilon}=\nabla\cdot\left(\nabla S_{\varepsilon}+\chi(S_{\varepsilon})S_{\varepsilon}\nabla I_{\varepsilon}\right)+f(S_{\varepsilon},I_{\varepsilon})&&\text{ in }\R^+\times\Omega,\\
  &\partial_{\nu} S_{\varepsilon}=0&&\text{ in }\R^+\times\partial\Omega,\\
  &S_{\varepsilon}(0,\cdot)=S_0&&\text{ in }\Omega.
 \end{empheq}
\end{subequations}
For smooth $\bar{S}_{\varepsilon}$ and $I_0$ standard parabolic theory \cite{LSU} insures the existence of a unique classical solution $I_{\varepsilon}$ to \eqref{Ie}. Similarly, such $I_{\varepsilon}$ and smooth $S_0$ lead to a unique classical solution $S_{\varepsilon}$ to \eqref{Se}. Moreover, it is clear from the proofs that results of {\it Lemmas \ref{Lembnd}-\ref{LemConv}}  continue to hold. They allow to obtain solutions with the regularity as stated in {\it Definition \ref{DefSole}} under assumption \eqref{AssIni} by means of an approximation procedure. 

\noindent
Uniqueness can be established in both cases in the usual way by considering  equations for  differences of two solutions, testing with suitable test functions, performing estimates, and, finally, applying the  Gronwall lemma. Here we only check uniqueness for \eqref{Se}: Let $S_{\varepsilon}^{(1)}$ and $S_{\varepsilon}^{(2)}$ be two solutions corresponding to some $I_{\varepsilon}$ and $S_0$. Set
\begin{align}
 U:=S_{\varepsilon}^{(1)}-S_{\varepsilon}^{(2)},\quad \xi_1:=\frac{\chi(S_{\varepsilon}^{(1)})S_{\varepsilon}^{(1)}-\chi(S_{\varepsilon}^{(2)})S_{\varepsilon}^{(2)}}{S_{\varepsilon}^{(1)}-S_{\varepsilon}^{(2)}}\nabla I_{\varepsilon},\quad \xi_2:=\frac{f(S_{\varepsilon}^{(1)},I_{\varepsilon})-f(S_{\varepsilon}^{(2)},I_{\varepsilon})}{S_{\varepsilon}^{(1)}-S_{\varepsilon}^{(2)}}.\nonumber
\end{align}
In this notation we have for $U$ the linear equation
\begin{align}
 \partial_t U=\Delta U+\nabla\cdot(  \xi_1 U)+\xi_2U\qquad\text{in }L^2(0,T;(H^1(\Omega))').\label{EqU}
\end{align}
We need to verify that $U=0$ a.e. Observe that
\begin{align}
 &\xi_1=K\left(1+S_{\varepsilon}^{(1)}+S_{\varepsilon}^{(2)}\right)\nabla I_{\varepsilon}\in L^{\infty}(0,T;L^2(\Omega)),\label{xi1}\\
 &\nabla{\cdot}\xi_1=K\left(1+S_{\varepsilon}^{(1)}+S_{\varepsilon}^{(2)}\right)\Delta I_{\varepsilon}+K\left(1+\nabla S_{\varepsilon}^{(1)}+\nabla S_{\varepsilon}^{(2)}\right)\cdot\nabla I_{\varepsilon}\in L^2(0,T;L^1(\Omega)),\\
 &|\xi_2|\leq \|\partial_Sf\|_{L^{\infty}((0,1)\times \R^+))}<\infty.\label{estxi2}
\end{align}
%{ In der alten Version hatte ich einen Druckfehler: Es war $\nabla{\cdot}\xi_1$ gemeint, nicht $\nabla\xi_1$. Entschuldigung f\"ur die Verwirrung...:(}
% \begin{align}
%  &\xi_1=K\left(1+S_{\varepsilon}^{(1)}+S_{\varepsilon}^{(2)}\right)\nabla I_{\varepsilon}\in L^{\infty}(0,T;L^2(\Omega)),\label{xi1}\\
%  &\nabla\xi_1=K\left(1+S_{\varepsilon}^{(1)}+S_{\varepsilon}^{(2)}\right){\cb D^2}I_{\varepsilon}+{\cb K\left(\nabla S_{\varepsilon}^{(1)}+\nabla S_{\varepsilon}^{(2)}\right)\otimes }\nabla I_{\varepsilon}\in L^2(0,T;L^1(\Omega)),\\
%  &|\xi_2|\leq \|\partial_Sf\|_{L^{\infty}((0,1)\times \R^+))}<\infty.\label{estxi2}
% \end{align}
% {\cb wobei mit ${\cb D^2}I_{\varepsilon}$ die Hesse-Matrix von $I_{\varepsilon}$ gemeint ist. Wissen wir denn etwas \"uber deren Elemente, die nicht auf der Hauptdiagonale stehen? Wir haben bisher lediglich Absch\"atzungen f\"ur $\Delta I_{\varepsilon}$ angegeben, also in Lemma \ref{Lembnd}.}

\noindent
We are going to test \eqref{EqU} with \begin{align}
\sign_{\delta}(U):=\begin{cases}\sign (U)&\text{for }|U|\geq\delta,\\
\frac{1}{\delta}U&\text{for }|U|<\delta
\end{cases}\qquad\text{for }\delta>0,\nonumber
\end{align}
and then pass to the limit as $\delta\rightarrow0$. 
Since $\sign_{\delta}\in W^{1,\infty}(\R)$, it holds that
\begin{align}
\sign_{\delta}(U)\in L^2(0,T;H^1(\Omega)),\nonumber
\end{align}
i.e., it is a valid test function. Using the weak chain and product  rules and \eqref{xi1}-\eqref{estxi2} where necessary, we thus compute that
\begin{align}
 \left<\partial_tU,\sign_{\delta}(U)\right>=\frac{d}{dt}\int_{\Omega}\int_0^U\sign_{\delta}(W)\,dWdx\underset{\delta\rightarrow0}{\rightarrow}\frac{d}{dt}\|U\|_{L^1(\Omega)}\qquad \text{in }D'(0,T),\label{conv5}
\end{align}
\begin{align}
 \int_{\Omega}\nabla U\cdot\nabla\sign_{\delta}(U)\,dx=\int_{\Omega}\sign_{\delta}'(U)|\nabla U|^2\,dx\geq0,
\end{align}
\begin{align}
 \int_{\Omega}\sign_{\delta}(U)\nabla\cdot(\xi_1 U)\,dx=&\int_{\Omega}\left(U\sign_{\delta}(U)\nabla\cdot\xi_1+\xi_1\cdot\nabla \int_0^U\sign_{\delta}(W)\,dW\right) \,dx\nonumber\\
 \underset{\delta\rightarrow0}{\rightarrow}&\int_{\Omega}\left(|U|\nabla\cdot\xi_1+\xi_1\cdot\nabla|U|\right) \,dx\qquad \text{in }D'(0,T)\nonumber\\
 =&\int_{\Omega}\nabla\cdot(\xi_1|U|)\,dx=0,%\nonumber\\
 %=&0,
\end{align}
\begin{align}
 \int_{\Omega}\xi_2U\sign_{\delta}(U)\,dx\leq \Cl{C10}\|U\|_{L^1(\Omega)}.\label{est5}
\end{align}
Combining \eqref{conv5}-\eqref{est5} we obtain that
\begin{align}
 \frac{d}{dt}\|U\|_{L^1(\Omega)}\leq \Cr{C10}\|U\|_{L^1(\Omega)}.\label{est10}
\end{align}
Finally, applying the Gronwall lemma to \eqref{est10}, we conclude that $\|U\|_{L^1(\Omega)}\equiv0$.

\noindent
Altogether, we have a well-defined operator $\Phi$ in the following setting:
\begin{align}
&X:=L^2(0,T;L^2(\Omega)),\qquad {\cal M}:=\{S\in X:\ 0\leq S\leq 1\},\nonumber\\
 &\Phi: {\cal M}\rightarrow {\cal M},\qquad \Phi(\bar{S}_{\varepsilon}):=S_{\varepsilon}\nonumber
\end{align}
Thanks to  {\it Lemma \ref{LemConv}} the  image  $\Phi({\cal M})$ is precompact in $X$. Moreover, this Lemma together with fact that both equations are uniquely solvable imply the continuity of $\Phi$.
Therefore, the Schauder fixed point theorem implies the existence of a fixed point and, as a result, of a solution to \eqref{RepDege}. 

% Let $I_{\varepsilon}^{(1)}$ and $I_{\varepsilon}^{(2)}$ be solutions to \eqref{Ie} corresponding to some $\bar{S}_{\varepsilon}^{(1)}$ and $\bar{S}_{\varepsilon}^{(2)}$, respectively. Set
% \begin{align*}
%  W:=I_{\varepsilon}^{(1)}-I_{\varepsilon}^{(2)},\qquad \bar{U}:=\bar{S}_{\varepsilon}^{(1)}-\bar{S}_{\varepsilon}^{(2)}.
% \end{align*}
% Subtracting the equations for these two solutions, multiplying by $W-\Delta W$, integrating over $\Omega$, by parts where necessary, we get using the H\"older and Young inequalities that 
% \begin{align}
%  \frac{1}{2}\frac{d}{dt}\|W\|_{H^1(\Omega)}^2=&\int_{\Omega}-\left(\varepsilon+\bar{S}_{\varepsilon}^{(1)}\right)|\Delta W|^2+\left(\varepsilon+\bar{S}_{\varepsilon}^{(1)}\right)W\Delta W\,dx\nonumber\\
%  &+\int_{\Omega}\left(\bar{U}\Delta I_{\varepsilon}^{(2)}+\left(g\left(\bar{S}_{\varepsilon}^{(1)},I_{\varepsilon}^{(1)}\right)-g\left(\bar{S}_{\varepsilon}^{(2)},I_{\varepsilon}^{(2)}\right)\right)\right)\left(W-\Delta W\right)\,dx\nonumber\\
%  \leq &\varepsilon\|\Delta W\|^2+\C\|\Delta W\|\|W\|+
% \end{align}

\end{proof}
\section{Existence of  solutions to \texorpdfstring{\eqref{RepDeg}}{} { for} \texorpdfstring{$N=1$ and $S_0>0$}{}}\label{Ex1D}
{ Throughout} this section we assume that
\begin{align}
 \Omega\text{ is a finite interval in }\R\label{OmegaInt}
\end{align}
and 
%\footnote{From the perspective of the modeled epidemic problem the assumption of strict positivity for the initial density of susceptibles is not only reasonable, but also necessary. { Warum?... Es kann ja sein, dass man sich nur in bestimmten Teilen von Omega befindet.}}
\begin{align}
 0<\inf_{\Omega}S_0\leq S_0\leq 1.\label{PosS0}
\end{align}
\subsection{An a priori lower bound}
To begin with, { we return to} the regularised problem \eqref{RepDege} and establish an a priori uniform positive lower bound for $S_{\varepsilon}$.
\begin{Lemma}
 Solutions to \eqref{RepDege} satisfy
 \begin{align}
  \left\|S_{\varepsilon}^{-1}\right\|_{L^{\infty}(0,T;L^{\infty}(\Omega))}\leq\C(T).\label{estSm1}
 \end{align}

\end{Lemma}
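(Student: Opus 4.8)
The plan is to derive a differential inequality for the evolution of a suitable negative power of $S_{\varepsilon}$, or equivalently to apply a comparison principle directly to $S_{\varepsilon}$ from below. First I would note that the worst term to control is the repellent taxis term: writing out \eqref{EqSe} we have $\partial_t S_{\varepsilon}=\Delta S_{\varepsilon}+\nabla\cdot(\chi(S_{\varepsilon})S_{\varepsilon}\nabla I_{\varepsilon})+f(S_{\varepsilon},I_{\varepsilon})$, and expanding the divergence gives $\partial_t S_{\varepsilon}=\Delta S_{\varepsilon}+\chi(S_{\varepsilon})S_{\varepsilon}\Delta I_{\varepsilon}+(\chi(S_{\varepsilon})S_{\varepsilon})'\nabla S_{\varepsilon}\cdot\nabla I_{\varepsilon}+f(S_{\varepsilon},I_{\varepsilon})$, with $\chi(S_{\varepsilon})S_{\varepsilon}=K(1-S_{\varepsilon})S_{\varepsilon}$, so that this coefficient vanishes when $S_{\varepsilon}=0$. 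The key structural observation is that as $S_{\varepsilon}\to 0^+$, the taxis drift degenerates, so it cannot push $S_{\varepsilon}$ through zero; and the source term $f(S_{\varepsilon},I_{\varepsilon})=-\lambda_S\frac{S_{\varepsilon}I_{\varepsilon}}{S_{\varepsilon}+I_{\varepsilon}}+\mu_S S_{\varepsilon}(1-S_{\varepsilon})$ also vanishes to first order in $S_{\varepsilon}$ (since $\left|\frac{S_{\varepsilon}I_{\varepsilon}}{S_{\varepsilon}+I_{\varepsilon}}\right|\le S_{\varepsilon}$), hence $f(S_{\varepsilon},I_{\varepsilon})\ge -C_1 S_{\varepsilon}$ for a constant $C_1$ depending only on $\lambda_S,\mu_S$.

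Concretely, I would set $w:=S_{\varepsilon}^{-1}$ (legitimate since, at the smooth approximation level used in the existence proof, $S_{\varepsilon}$ is classical and strictly positive for positive smooth data by the strong maximum principle) and compute the PDE satisfied by $w$. Using $\partial_t w=-w^2\partial_t S_{\varepsilon}$, $\nabla w=-w^2\nabla S_{\varepsilon}$, $\Delta w=-w^2\Delta S_{\varepsilon}+2w^3|\nabla S_{\varepsilon}|^2=-w^2\Delta S_{\varepsilon}+2w^{-1}|\nabla w|^2$, one gets an equation of the form
\begin{align}
\partial_t w=\Delta w-2w^{-1}|\nabla w|^2+K(1-S_{\varepsilon})w\,(-\Delta I_{\varepsilon})\cdot(\text{bounded})+(\text{drift})\cdot\nabla w-w^2 f(S_{\varepsilon},I_{\varepsilon}),\nonumber
\end{align}
where the gradient term $-2w^{-1}|\nabla w|^2$ has a favourable sign, the first-order drift terms are harmless for an $L^\infty$ estimate, and $-w^2 f(S_{\varepsilon},I_{\varepsilon})\le C_1 w$. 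The troublesome contribution is the one involving $\Delta I_{\varepsilon}$, which is only in $L^2$ in time and space, not $L^\infty$. I would handle this not pointwise but by testing with $w^{p-1}$ for large $p$ (or directly with powers of $S_{\varepsilon}^{-1}$) and using estimate \eqref{estDeltaIe}: since $K(1-S_{\varepsilon})S_{\varepsilon}\Delta I_{\varepsilon}$ is what appears in the original $S_{\varepsilon}$-equation, the factor $S_{\varepsilon}$ combines with $\sqrt{S_{\varepsilon}}\Delta I_{\varepsilon}\in L^2$ to give, after multiplying by $w^p=S_{\varepsilon}^{-p}$, a term controlled by $\|\sqrt{S_{\varepsilon}}\Delta I_{\varepsilon}\|_{L^2}\,\|S_{\varepsilon}^{-p+1/2}\|_{L^2}$ — a Gronwall-type feedback on $\int S_{\varepsilon}^{-q}$ with an $L^2_t$ coefficient. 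A Moser / Alikakos iteration in $p$ then upgrades the resulting uniform $L^\infty(0,T;L^q(\Omega))$ bounds to the $L^\infty(0,T;L^\infty(\Omega))$ bound \eqref{estSm1}, with all constants independent of $\varepsilon$ because the only $\varepsilon$-dependence enters through $I_{\varepsilon}$, whose relevant norms are $\varepsilon$-uniform by \emph{Lemma \ref{Lembnd}}. Here it is essential that $N=1$: then $\Delta I_{\varepsilon}\in L^2_{t,x}$ together with $\nabla I_{\varepsilon}\in L^\infty_t L^2_x$ gives, via the one-dimensional Sobolev embedding $H^1(\Omega)\hookrightarrow L^\infty(\Omega)$ applied to $\nabla I_{\varepsilon}$ (so that $\nabla I_{\varepsilon}$ is bounded pointwise a.e.), enough control of the first-order term $(\chi S)'\nabla S_{\varepsilon}\cdot\nabla I_{\varepsilon}$ and related quantities.

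The main obstacle I expect is precisely the $\Delta I_{\varepsilon}$ term: it is genuinely unbounded, so one cannot simply invoke a scalar ODE comparison for $\sup_\Omega S_{\varepsilon}^{-1}$. The resolution is the observation above, that this term appears in \eqref{EqSe} pre-multiplied by $S_{\varepsilon}$, which exactly cancels one power of $w=S_{\varepsilon}^{-1}$ and leaves a half-power of $S_{\varepsilon}$ free to pair with the $L^2$ quantity $\sqrt{S_{\varepsilon}}\Delta I_{\varepsilon}$ from \eqref{estDeltaIe}; combined with the first-order-vanishing of $f$ near $S_{\varepsilon}=0$ and the favourable sign of $-2w^{-1}|\nabla w|^2$, this closes a Gronwall estimate on moments of $w$, and Moser iteration finishes. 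A clean alternative, which I would mention as a shortcut, is to avoid $w$ altogether and instead construct an explicit subsolution of \eqref{Se}: since $\chi(S_{\varepsilon})S_{\varepsilon}\nabla I_{\varepsilon}$ and $f$ both vanish at $S_{\varepsilon}=0$ and the relevant coefficients are bounded in the norms supplied by \emph{Lemma \ref{Lembnd}}, the function $\underline{S}(t):=(\inf_\Omega S_0)\,e^{-C_2 t}$, for $C_2$ large enough and independent of $\varepsilon$, is a spatially homogeneous subsolution, whence $S_{\varepsilon}\ge\underline{S}(T)>0$ on $[0,T]$ by the comparison principle for the (uniformly parabolic, at the regularised level) equation \eqref{Se} — giving \eqref{estSm1} with $C_3(T)=(\inf_\Omega S_0)^{-1}e^{C_2 T}$.
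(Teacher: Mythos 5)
Your overall strategy for the main route---propagating $L^p$ bounds on $S_\varepsilon^{-1}$ and upgrading to $L^\infty$ by a Moser--Alikakos iteration---is the paper's strategy, but two of your key steps do not go through as written. By expanding the divergence in \eqref{EqSe} you force the term $\chi(S_\varepsilon)S_\varepsilon\Delta I_\varepsilon$ into the analysis, and your claimed control of it is off by a full power of $w=S_\varepsilon^{-1}$: multiplying \eqref{EqSe} by $-pS_\varepsilon^{-p-1}$, that term contributes $-p\int_\Omega K(1-S_\varepsilon)S_\varepsilon^{-p}\Delta I_\varepsilon\,dx$, and since
\begin{align}
 S_\varepsilon^{-p}\Delta I_\varepsilon=S_\varepsilon^{-p-\frac12}\bigl(\sqrt{S_\varepsilon}\,\Delta I_\varepsilon\bigr),\nonumber
\end{align}
pairing with \eqref{estDeltaIe} costs $\bigl\|S_\varepsilon^{-p-\frac12}\bigr\|_{L^2(\Omega)}$, not $\bigl\|S_\varepsilon^{-p+\frac12}\bigr\|_{L^2(\Omega)}$ as you assert. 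Either way this is a \emph{higher} moment of $w$ than the one being estimated, so the recursion runs in the wrong direction and the Gronwall argument does not close without an additional interpolation/absorption step that you have not supplied. The paper avoids the issue entirely by never expanding the divergence: integrating by parts moves the derivative onto the test function $S_\varepsilon^{-p-1}$, so only $\nabla I_\varepsilon$ appears, which is uniformly bounded in $L^{\infty}(0,T;L^2(\Omega))$ by \eqref{estnabIe}; the resulting term $Cp\|\nabla S_\varepsilon^{-p/2}\|_{L^2(\Omega)}\|S_\varepsilon^{-p/2}\|_{L^\infty(\Omega)}\|\nabla I_\varepsilon\|_{L^2(\Omega)}$ is then handled by the one-dimensional Gagliardo--Nirenberg inequality (interpolating $\|S_\varepsilon^{-p/2}\|_{L^\infty(\Omega)}$ between $H^1(\Omega)$ and $L^1(\Omega)$), absorption into the diffusion term, and the recursive lemma from \cite{LSU}. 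Relatedly, your claim that $\nabla I_\varepsilon$ is pointwise bounded via $H^1(\Omega)\hookrightarrow L^\infty(\Omega)$ would require a uniform bound on $\|\Delta I_\varepsilon\|_{L^2(\Omega)}$, which Lemma \ref{Lembnd} does not provide: only $\sqrt{\varepsilon}\,\Delta I_\varepsilon$ and $\sqrt{S_\varepsilon}\,\Delta I_\varepsilon$ are controlled.

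The proposed shortcut via the spatially homogeneous subsolution $\underline S(t)=(\inf_\Omega S_0)e^{-Ct}$ fails outright, and for the very reason you yourself identify earlier in your write-up. For a constant-in-space candidate the taxis term in \eqref{Se} reduces to $\chi(\underline S)\underline S\,\Delta I_\varepsilon$, so the subsolution inequality requires $C\geq K|\Delta I_\varepsilon|+C'$ pointwise, i.e.\ an $\varepsilon$-uniform bound on $\Delta I_\varepsilon$ in $L^1(0,T;L^\infty(\Omega))$, which is not available. A scalar ODE comparison for $\inf_\Omega S_\varepsilon$ therefore cannot replace the moment iteration here; the degeneracy of the taxis flux at $S_\varepsilon=0$ is not by itself enough, because the flux must be differentiated before it enters the comparison.
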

\begin{proof}
 We use the standard method of propagation of $L^p$ bounds in order to derive a finite uniform upper bound for $S_{\varepsilon}^{-1}$. Its inverse gives a uniform positive lower bound for $S_{\varepsilon}$. Let $p\geq1$. Multiplying \eqref{EqSe} by $-pS_{\varepsilon}^{-p-1}$ and integrating by parts over $\Omega$ we obtain using the H\"older, Young, and { Gagliardo-Nirenberg} inequalities as well as estimate \eqref{estnabIe} where necessary that
 \begin{align}
  \frac{d}{dt}\left\|S_{\varepsilon}^{-\frac{p}{2}}\right\|_{L^2(\Omega)}^2
  =&-\frac{4(p+1)}{p}\left\|\nabla S_{\varepsilon}^{-\frac{p}{2}}\right\|_{L^2(\Omega)}^2+2(p+1)\int_{\Omega}\chi(S_{\varepsilon})S_{\varepsilon}^{-\frac{p}{2}}\nabla S_{\varepsilon}^{-\frac{p}{2}}\cdot \nabla I_{\varepsilon}\,dx\nonumber\\
  &-p\int_{\Omega}f(S_{\varepsilon},I_{\varepsilon})S_{\varepsilon}^{-p-1}\,dx\nonumber\\
  \leq& -\C\left\|\nabla S_{\varepsilon}^{-\frac{p}{2}}\right\|_{L^2(\Omega)}^2+\C p\left\|\nabla S_{\varepsilon}^{-\frac{p}{2}}\right\|_{L^2(\Omega)}\left\| S_{\varepsilon}^{-\frac{p}{2}}\right\|_{L^{\infty}(\Omega)}\|\nabla I_{\varepsilon}\|_{L^2(\Omega)}\nonumber\\
  &+p\lambda_S \left\|S_{\varepsilon}^{-1}\right\|_{L^p(\Omega)}^p\nonumber\\
  \leq& -\Cl{C1}\left\|\nabla S_{\varepsilon}^{-\frac{p}{2}}\right\|_{L^2(\Omega)}^2+\C p^2\left\| S_{\varepsilon}^{-\frac{p}{2}}\right\|_{L^{\infty}(\Omega)}^2
  \nonumber\\
  \leq& -\Cr{C1}\left\|S_{\varepsilon}^{-\frac{p}{2}}\right\|_{H^1(\Omega)}^2+\C p^2\left\| S_{\varepsilon}^{-\frac{p}{2}}\right\|_{L^{\infty}(\Omega)}^2\nonumber\\
  \leq& -\Cr{C1}\left\|S_{\varepsilon}^{-\frac{p}{2}}\right\|_{H^1(\Omega)}^2+\C p^2\left\|S_{\varepsilon}^{-\frac{p}{2}}\right\|_{H^1(\Omega)}^{\frac{4}{3}}\left\| S_{\varepsilon}^{-\frac{p}{2}}\right\|_{L^1(\Omega)}^{\frac{2}{3}}\nonumber\\
  \leq& \Cl{C5} p^6\left\| S_{\varepsilon}^{-\frac{p}{2}}\right\|_{L^1(\Omega)}^2\label{estL1}
  \\
  \leq& \Cl{C3} p^6\left\| S_{\varepsilon}^{-\frac{p}{2}}\right\|_{L^2(\Omega)}^2.\label{estL2}
 \end{align}
Using the Gronwall lemma we conclude from \eqref{estL2} that
\begin{align}
 \left\|S_{\varepsilon}^{-1}(t,\cdot)\right\|_{L^p(\Omega)}^p\leq &e^{t\Cr{C3} p^6}\left\|S_{0}^{-1}\right\|_{L^p(\Omega)}^p%\nonumber\\
 \leq e^{t\Cr{C3} p^6}|\Omega|\left\|S_{0}^{-1}\right\|_{L^{\infty}(\Omega)}^p.\nonumber
\end{align}
Consequently,
\begin{align}
 \left\|S_{\varepsilon}^{-1}\right\|_{L^{\infty}(0,T;L^p(\Omega))}\leq \C(T,p)\qquad\text{for all }p\geq1.\label{estLp}
\end{align}
Further, integrating \eqref{estL1} over $(0,t)$ we obtain that
\begin{align}
 \left\|S_{\varepsilon}^{-1}(t,\cdot)\right\|_{L^p(\Omega)}^p\leq &\left\|S_{0}^{-1}\right\|_{L^p(\Omega)}^p+\Cr{C5} p^6\int_0^t\left\|S_{\varepsilon}^{-1}({ s},\cdot)\right\|_{L^{\frac{p}{2}}(\Omega)}^p\,d s.\nonumber
\end{align}
Consequently, 
\begin{align}
 \left\|S_{\varepsilon}^{-1}\right\|_{L^{\infty}(0,T;L^p(\Omega))}^p\leq |\Omega|\left\|S_{0}^{-1}\right\|_{L^{\infty}(\Omega)}^p+T\Cr{C5} p^6\left\|S_{\varepsilon}^{-1}\right\|_{L^{\infty}(0,T;L^{\frac{p}{2}}(\Omega))}^p.\label{estrec1}
\end{align}
For $n\in\N$ we introduce 
\begin{align}
 A_n:=\left\|S_{0}^{-1}\right\|_{L^{\infty}(\Omega)}^{2^{n+1}}+\left\|S_{\varepsilon}^{-1}\right\|_{L^{\infty}(0,T;L^{2^{n+1}}(\Omega))}^{2^{n+1}}.\nonumber
\end{align}
Due to estimate \eqref{estrec1} we have that
\begin{align}
 A_{n+1}\leq\Cl{C4}(T)(2^{6})^n A_{n}^2,\label{estArec}
\end{align}
whereas \eqref{estLp} implies that
\begin{align}
 A_0\leq \Cl{CA0}(T).\label{estA0}
\end{align}
Using the standard recursive result 
% \cite[Chapter 2, Lemma 4.7]{LU} 
\cite[Chapter 2, Lemma 5.6]{LSU} 
we conclude with \eqref{estArec} that for all $n\in\N$
\begin{align}
 A_n\leq &\Cr{C4}^{2^{n}-1}(T)(2^{6})^{2^{n}-1-n}A_0^{2^{n}}.\label{estAn}
\end{align}
Combining \eqref{estA0}-\eqref{estAn} we conclude that
\begin{align}
 \left\|S_{\varepsilon}^{-1}\right\|_{L^{\infty}(0,T;L^{2^{n+1}}(\Omega))}\leq A_n^{2^{-(n+1)}}\leq \C(T).\label{estuni}
\end{align}
Since $\|\cdot\|_{2^{n+1}}\rightarrow\|\cdot\|_{\infty}$ as $n\rightarrow\infty$, \eqref{estuni} implies \eqref{estSm1} .
\end{proof}

\subsection{Existence of solutions}
Having { obtained the} estimate \eqref{estSm1} we can now prove the existence of weak-strong solutions to \eqref{RepDeg}.  The definition is as follows:
\begin{Definition}[Weak-strong solution]\label{DefSol} We call a pair of measurable functions $(S,I):\R_0^+\times \overline{\Omega}\rightarrow[0,1]\times \R_0^+$ a \underline{weak-strong solution} to system \eqref{RepDeg} if: 
\begin{enumerate} 
  \item $\nabla S\in L^2_{loc}(\R_0^+;L^2(\Omega))$, $\partial_t S\in L^2_{loc}(\R_0^+;(H^1(\Omega))')$;
  \item $I\in L^2_{loc}(\R_0^+;H^2(\Omega))$, $\nabla I\in L^{\infty}(\R_0^+;L^2(\Omega))$, $\partial_t I\in L^2_{loc}(\R_0^+;L^2(\Omega))$;
 \item  the pair $(S,I)$ is a weak solution to  \eqref{EqS} and a  strong solution to \eqref{EqI}, i.e., 
 for all $\varphi\in H^1(\Omega)$  it holds that
 \begin{subequations}
 \begin{align}
  &\left<\partial_t S,\varphi\right>=-\int_{\Omega}(\nabla S+\chi(S)S\nabla I)\cdot\nabla\varphi \,dx+{ \int _{\Omega}}f(S,I)\varphi\,dx\qquad\text{a.e. in }\R_0^+,\label{EqSweakN1}\\
 &\partial_t I =S\Delta I +g(S,I)\qquad\text{a.e. in }\R_0^+\times\Omega,\label{EqIstrong}\\
 &\partial_{\nu}I=0\qquad\text{a.e. in }\R_0^+\times\partial\Omega,\label{bcI}
 \\
 &S(0,\cdot)=S_0,\qquad I(0,\cdot)=I_0\qquad\text{a.e. in }\Omega.
\end{align}
\end{subequations}
\end{enumerate}

\end{Definition}
\begin{Theorem}[Existence of a weak-strong solution]
 Under assumptions \eqref{AssIni} and  \eqref{OmegaInt}-\eqref{PosS0} system \eqref{RepDeg} possesses a weak-strong solution in terms of {\it Definition \ref{DefSol}}.
\end{Theorem}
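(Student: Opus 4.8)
The plan is to pass to the limit $\varepsilon_n\to 0$ in the weak-strong solutions $(S_{\varepsilon_n},I_{\varepsilon_n})$ of the regularised system \eqref{RepDege}, exploiting the \emph{additional} a priori information available in the present 1D, positively-bounded-below setting — namely the uniform lower bound \eqref{estSm1} for $S_{\varepsilon}$ — which is exactly what was missing in the general discussion following \eqref{RepDege}. First I would fix $T>0$ and invoke {\it Lemma \ref{LemConv}} to extract a sequence $\varepsilon_n\to 0$ along which the convergences \eqref{convnabIe}--\eqref{convSet} hold, with limit $(S,I)$; in particular $S_{\varepsilon_n}\to S$ and $I_{\varepsilon_n}\to I$ a.e. and in $L^2(0,T;L^2(\Omega))$, $\nabla S_{\varepsilon_n}\rightharpoonup\nabla S$, $\partial_t S_{\varepsilon_n}\rightharpoonup\partial_t S$, $\nabla I_{\varepsilon_n}\overset{*}{\rightharpoonup}\nabla I$, $\partial_t I_{\varepsilon_n}\rightharpoonup\partial_t I$. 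From \eqref{estSm1} and Fatou (or weak-$*$ lower semicontinuity together with the a.e. convergence) one gets $\|S^{-1}\|_{L^{\infty}(0,T;L^{\infty}(\Omega))}\le C(T)$, so in particular $S>0$ a.e. and $S^{-1}_{\varepsilon_n}\to S^{-1}$ a.e. with a uniform $L^{\infty}$ bound, hence $S^{-1}_{\varepsilon_n}\to S^{-1}$ strongly in every $L^q(0,T;L^q(\Omega))$, $q<\infty$, by dominated convergence.

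Next I would upgrade the regularity of $I$. Rewrite \eqref{EqIe} as $\Delta I_{\varepsilon_n}=(\varepsilon_n+S_{\varepsilon_n})^{-1}\bigl(\partial_t I_{\varepsilon_n}-g(S_{\varepsilon_n},I_{\varepsilon_n})\bigr)$. On the right, $\partial_t I_{\varepsilon_n}$ is bounded in $L^2(0,T;L^2(\Omega))$ by \eqref{estptIe}, $g(S_{\varepsilon_n},I_{\varepsilon_n})$ is bounded there by \eqref{estIe} and the boundedness of $S_{\varepsilon_n}$, and $(\varepsilon_n+S_{\varepsilon_n})^{-1}\le S_{\varepsilon_n}^{-1}$ is uniformly bounded in $L^{\infty}$ by \eqref{estSm1}; therefore $\Delta I_{\varepsilon_n}$ is bounded in $L^2(0,T;L^2(\Omega))$, and by elliptic regularity (with the Neumann condition \eqref{bce}, $N=1$) $I_{\varepsilon_n}$ is bounded in $L^2(0,T;H^2(\Omega))$. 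Combined with $\partial_t I_{\varepsilon_n}$ bounded in $L^2(0,T;L^2(\Omega))$, the Lions--Aubin lemma gives $\nabla I_{\varepsilon_n}\to\nabla I$ strongly in $L^2(0,T;L^2(\Omega))$ (here $N=1$ is what makes $H^1\hookrightarrow L^{\infty}$ and the product estimates clean), and $\Delta I_{\varepsilon_n}\rightharpoonup\Delta I$ weakly in $L^2(0,T;L^2(\Omega))$, so $I\in L^2_{loc}(\R^+_0;H^2(\Omega))$ with $\partial_{\nu}I=0$.

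The core step is passing to the limit in the nonlinear products. For \eqref{EqSweake}: the term $\int_\Omega\nabla S_{\varepsilon_n}\cdot\nabla\varphi$ passes by weak convergence of $\nabla S_{\varepsilon_n}$; for the taxis term $\int_\Omega\chi(S_{\varepsilon_n})S_{\varepsilon_n}\nabla I_{\varepsilon_n}\cdot\nabla\varphi$ one uses that $\chi(S_{\varepsilon_n})S_{\varepsilon_n}=K(1-S_{\varepsilon_n})S_{\varepsilon_n}$ converges a.e. and is bounded, hence strongly in $L^2(0,T;L^2(\Omega))$, against the strong convergence of $\nabla I_{\varepsilon_n}$ just obtained; the reaction term $f(S_{\varepsilon_n},I_{\varepsilon_n})$ converges by a.e. convergence, continuity of $f$ on $[0,1]\times\R^+_0$ and the uniform bounds (dominated convergence, noting $|f|$ is controlled by $I_{\varepsilon_n}+1$ which is bounded in $L^2$). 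For the strong equation \eqref{EqIstronge}: $\partial_t I_{\varepsilon_n}\rightharpoonup\partial_t I$; in $S_{\varepsilon_n}\Delta I_{\varepsilon_n}$ we now have $S_{\varepsilon_n}\to S$ strongly in $L^2$ and a.e. \emph{and bounded in $L^\infty$}, while $\Delta I_{\varepsilon_n}\rightharpoonup\Delta I$ weakly in $L^2$, so the product converges weakly in $L^1(0,T;L^1(\Omega))$ (strong $\times$ weak in $L^2$), identifying the limit as $S\Delta I$; and $g(S_{\varepsilon_n},I_{\varepsilon_n})\to g(S,I)$ as for $f$. The initial conditions survive because $\partial_t S_{\varepsilon_n}$, $\partial_t I_{\varepsilon_n}$ are bounded so $S_{\varepsilon_n}(0,\cdot)$, $I_{\varepsilon_n}(0,\cdot)$ are inherited in the appropriate weak sense; finally $0\le S\le 1$ and $I\ge 0$ pass to the limit a.e. A diagonal argument over $T\to\infty$ yields a solution on $\R^+_0$.

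\textbf{Main obstacle.} The crux — and the only place where the 1D-plus-positivity hypotheses are essential — is getting the uniform $L^2(0,T;H^2(\Omega))$ bound on $I_{\varepsilon_n}$ and thereby the \emph{strong} convergence of $\nabla I_{\varepsilon_n}$; without the uniform lower bound \eqref{estSm1} one cannot divide by $\varepsilon_n+S_{\varepsilon_n}$ to control $\Delta I_{\varepsilon_n}$, and the product $S_{\varepsilon_n}\Delta I_{\varepsilon_n}$ would be only weak-times-weak, which is precisely the difficulty flagged in {\it Section \ref{sec:model}}. Everything else is a routine strong-times-weak or dominated-convergence argument.
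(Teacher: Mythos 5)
Your proposal is correct and follows essentially the same route as the paper: both hinge on combining the lower bound \eqref{estSm1} with the earlier estimates to obtain a uniform $L^2(0,T;L^2(\Omega))$ bound on $\Delta I_{\varepsilon_n}$ (the paper writes $\Delta I_{\varepsilon}=S_{\varepsilon}^{-1/2}\cdot\sqrt{S_{\varepsilon}}\Delta I_{\varepsilon}$ and uses \eqref{estDeltaIe} directly, while you extract the same bound from the equation — an equivalent one-liner), after which the limit passage in $S_{\varepsilon_n}\Delta I_{\varepsilon_n}$ and in the taxis term is the standard strong-times-weak argument. Your additional Aubin--Lions step giving strong convergence of $\nabla I_{\varepsilon_n}$ is more than is strictly needed (weak convergence of $\nabla I_{\varepsilon_n}$ against the strongly convergent factor $\chi(S_{\varepsilon_n})S_{\varepsilon_n}$ already suffices), but it is valid and does no harm.
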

\begin{proof}
 Our starting point is the weak formulation from {\it Definition \ref{DefSole}} and {\it Lemma \ref{LemConv}} on convergence.  Thanks to the uniform estimates \eqref{estDeltaIe} and \eqref{estSm1} and the Banach-Alaoglu theorem we may assume that the sequence from that Lemma is chosen in such a way that  
\begin{alignat}{3}
 &\Delta I_{\varepsilon_n}\underset{n\rightarrow\infty}{\rightharpoonup}\Delta I&&\qquad \text{in }L^2(0,T;L^2(\Omega))\label{convDeltaIe}
\end{alignat}
holds as well. 
Using \eqref{estnabIe}-\eqref{estSet} and \eqref{convDeltaIe}   together with the dominated convergence theorem and compensated compactness, we can pass to the limit in \eqref{DefWStre} along the sequence  $\varepsilon_n\rightarrow 0$ and thus  obtain that $(S,I)$ satisfies all conditions from {\it Definition \ref{DefSol}}.
\end{proof}

\section{Existence of  supersolutions to \texorpdfstring{\eqref{RepDeg}}{} { for} \texorpdfstring{$N\geq2$ and $S_0\geq0$}{}}\label{Gensuper}
\noindent
In this Section we consider the general case of an arbitrary space dimension, { assume} $0\leq S_0\leq 1$, and prove the existence of a weak supersolution to \eqref{RepDeg}. The definition is as follows:
\begin{Definition}[Weak supersolution]\label{DefGenSuperSol}We call a pair of measurable functions $(S,I):\R_0^+\times \overline{\Omega}\rightarrow[0,1]\times \R_0^+$ a \underline{weak supersolution} to system \eqref{RepDeg} if: 
\begin{enumerate} 
  \item $\nabla S\in L^2_{loc}(\R_0^+;L^2(\Omega))$, $\partial_t S\in L^2_{loc}(\R_0^+;(H^1(\Omega))')$;
  \item $I\in L^2_{loc}(\R_0^+;L^2(\Omega))$, $\nabla I\in L^{\infty}(\R_0^+;L^2(\Omega))$, $\partial_t I\in L^2_{loc}(\R_0^+;L^2(\Omega))$;
 \item  the pair $(S,I)$ is a weak solution to  \eqref{EqS} and a weak supersolution to \eqref{EqI}, i.e., 
 for all $\varphi\in H^1(\Omega)$ and $0\leq\psi\in W^{1,\infty}(\Omega)$  it holds that
 \begin{subequations}\label{EqSuperweak}
 \begin{align}
  &\left<\partial_t S,\varphi\right>=-\int_{\Omega}(\nabla S+\chi(S)S\nabla I)\cdot\nabla\varphi\,dx+{ \int_{\Omega}} f(S,I)\varphi\,dx\qquad\text{a.e. in }\R_0^+,\label{EqSweak}\\
 &\int_{\Omega}\partial_t I \psi\,dx
 \geq\int_{\Omega}\left(-\nabla I\cdot \nabla(\psi S)+g(S,I)\psi\right) \,dx\qquad\text{a.e. in }\R_0^+,\label{EqIweak}\\
 &S(0,\cdot)=S_0,\qquad I(0,\cdot)=I_0\qquad\text{a.e. in }\Omega.\label{inisuper}
\end{align}
\end{subequations}
\end{enumerate}
\end{Definition}
\begin{Remark}
 Recently weak (generalised) supersolutions in the form of a variational inequality have been used in order to provide a solution concept for models with positive chemotaxis, see e.g. \cite{LankWink2017,ZhigunKSLog,ZhigunKS}. In those cases, however,  both equations are in divergent form, while equation \eqref{EqI} is not. The latter precludes the possibility to close  \eqref{EqSuperweak} by imposing a   suitable mass control from above. As a result, even a smooth supersolution $(S,I)$ is not automatically a subsolution to \eqref{RepDeg}.    
%\end{Remark}
%\noindent
%The result of this Section is
\begin{Theorem}[Existence of a weak supersolution] Let \eqref{AssIni} hold. Then, system \eqref{RepDeg} possesses a weak supersolution in terms of {\it Definition \ref{DefGenSuperSol}}.
\end{Theorem}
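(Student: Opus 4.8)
The plan is to pass to the limit $\varepsilon\to0$ along a sequence of weak--strong solutions $(S_{\varepsilon_n},I_{\varepsilon_n})$ of \eqref{RepDege}, as furnished by {\it Lemmas \ref{Lembnd} and \ref{LemConv}} together with the existence theorem for \eqref{RepDege}. Besides the convergences \eqref{convnabIe}--\eqref{convSet}, I would use two further consequences of the a priori estimates: by \eqref{estDeltaIe} the sequence $\sqrt{S_{\varepsilon_n}}\Delta I_{\varepsilon_n}$ is bounded in $L^2(0,T;L^2(\Omega))$, so that along a further subsequence $\sqrt{S_{\varepsilon_n}}\Delta I_{\varepsilon_n}\rightharpoonup\eta$ in $L^2(0,T;L^2(\Omega))$ for some $\eta$; and, by \eqref{EqIe} and \eqref{estptIe}, $(\varepsilon_n+S_{\varepsilon_n})\Delta I_{\varepsilon_n}=\partial_tI_{\varepsilon_n}-g(S_{\varepsilon_n},I_{\varepsilon_n})$ is bounded in $L^2(0,T;L^2(\Omega))$. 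Since $I_{\varepsilon_n}$ stays bounded in $L^\infty$ on bounded time intervals (by comparison, because $g(S,I)\le\lambda_I$) and $f,g$ are then bounded there, the limit pair $(S,I)$ has exactly the regularity demanded by {\it Definition \ref{DefGenSuperSol}}; note that no $H^2$--bound on $I$ and no positive lower bound on $S_0$ enter here, which is why the restrictions of {\it Section \ref{Ex1D}} can be dropped.

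Passing to the limit in the weak $S$--equation \eqref{EqSweake} is routine: $\nabla S_{\varepsilon_n}\rightharpoonup\nabla S$, $\chi(S_{\varepsilon_n})S_{\varepsilon_n}\nabla I_{\varepsilon_n}\rightharpoonup\chi(S)S\nabla I$ (a boundedly a.e.\ convergent factor times a weakly convergent one), $f(S_{\varepsilon_n},I_{\varepsilon_n})\to f(S,I)$ in $L^2$, $\partial_tS_{\varepsilon_n}\rightharpoonup\partial_tS$, and $S(0,\cdot)=S_0$ is recovered from $S\in C([0,T];L^2(\Omega))$. This yields \eqref{EqSweak} and the $S$--part of \eqref{inisuper}.

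For the $I$--part, fix $0\le\psi\in W^{1,\infty}(\Omega)$ and $0\le\phi\in C_c^\infty((0,T))$, test \eqref{EqIstronge} (with $\varepsilon=\varepsilon_n$) by $\phi\psi$, integrate over $(0,T)\times\Omega$, and use $\partial_\nu I_{\varepsilon_n}=0$ to write $\int_\Omega\psi(\varepsilon_n+S_{\varepsilon_n})\Delta I_{\varepsilon_n}=-\varepsilon_n\int_\Omega\nabla I_{\varepsilon_n}\cdot\nabla\psi+\int_\Omega(\psi\sqrt{S_{\varepsilon_n}})(\sqrt{S_{\varepsilon_n}}\Delta I_{\varepsilon_n})$. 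Letting $n\to\infty$ (the $\varepsilon_n$--term vanishes by \eqref{estnabIe}, and $\psi\sqrt{S_{\varepsilon_n}}\to\psi\sqrt S$ strongly in $L^2$ against $\sqrt{S_{\varepsilon_n}}\Delta I_{\varepsilon_n}\rightharpoonup\eta$) gives $\partial_tI=\sqrt S\,\eta+g(S,I)$ a.e. Hence \eqref{EqIweak} reduces to showing
\[
 \int_0^T\int_\Omega\phi\,\psi\sqrt S\,\eta\;+\;\int_0^T\int_\Omega\phi\,\nabla I\cdot\nabla(\psi S)\;\ge\;0
\]
for all such $\psi,\phi$; this in turn implies \eqref{EqIweak} for a.e.\ $t$ by a routine density/Lebesgue--point argument in $t$.

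To prove the displayed inequality I would use $\int_\Omega\psi\sqrt S\,\eta=\lim_n\int_\Omega\psi S_{\varepsilon_n}\Delta I_{\varepsilon_n}=-\lim_n\int_\Omega\nabla I_{\varepsilon_n}\cdot\nabla(\psi S_{\varepsilon_n})$ (integration by parts) and $\int_\Omega\nabla I\cdot\nabla(\psi S)=\lim_n\int_\Omega\nabla I_{\varepsilon_n}\cdot\nabla(\psi S)$, so that, after discarding a $\nabla\psi$--term that vanishes by strong--weak convergence, the left-hand side equals $\lim_n\int_0^T\int_\Omega\phi\,\psi\,\nabla I_{\varepsilon_n}\cdot\nabla(S-S_{\varepsilon_n})$, and then, subtracting off $\nabla I$, equals $-\lim_n\int_0^T\int_\Omega\phi\,\psi\,\nabla(S_{\varepsilon_n}-S)\cdot\nabla(I_{\varepsilon_n}-I)$. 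The key step is then to subtract the weak $S$--equations for $S_{\varepsilon_n}$ and for $S$ and test the difference with $\phi\,\psi\,(I_{\varepsilon_n}-I)\in L^2(0,T;H^1(\Omega))$: integrating by parts in time is admissible because $\phi$ is compactly supported in $(0,T)$ and $(S_{\varepsilon_n}-S)(0,\cdot)=0$, whence the time--derivative contribution tends to $0$; the lower--order terms vanish in the limit as well; and the cross--diffusion term contributes, up to $o(1)$, the quantity $\int_0^T\int_\Omega\phi\,\psi\,\chi(S_{\varepsilon_n})S_{\varepsilon_n}\,|\nabla(I_{\varepsilon_n}-I)|^2$. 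Since $\chi(S_{\varepsilon_n})S_{\varepsilon_n}=K\,S_{\varepsilon_n}(1-S_{\varepsilon_n})\ge0$ (because $0\le S_{\varepsilon_n}\le1$) and $\phi,\psi\ge0$, this is $\ge o(1)$, so $\liminf_n(-\int_0^T\int_\Omega\phi\psi\,\nabla(S_{\varepsilon_n}-S)\cdot\nabla(I_{\varepsilon_n}-I))\ge0$, which gives the desired inequality. I expect this last step to be the main obstacle: owing to the strong coupling and the non--divergence structure, the products $\nabla S_{\varepsilon_n}\cdot\nabla I_{\varepsilon_n}$ and $\partial_tI_{\varepsilon_n}\,\nabla I_{\varepsilon_n}$ pair only weakly convergent factors and cannot be identified in the limit, so that one can only recover this one--sided (lower semicontinuous) estimate; the Minty--type test above makes this precise, and it is decisive that the regularisation preserves $0\le S_{\varepsilon_n}\le1$ (so the tactic mobility $\chi(S_{\varepsilon_n})S_{\varepsilon_n}$ is nonnegative) and that the test functions in \eqref{EqIweak} are nonnegative. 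This is the structural reason why, in general, only a weak supersolution in the variational--inequality form of {\it Definition \ref{DefGenSuperSol}} can be reached.
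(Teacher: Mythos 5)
Your argument is correct in substance, but it takes a genuinely different route from the paper's. The paper never introduces the weak limit $\eta$ of $\sqrt{S_{\varepsilon_n}}\Delta I_{\varepsilon_n}$ and never manipulates differences of solutions: it tests \eqref{EqSweake} with $-\eta I_{\varepsilon}$ and \eqref{EqIe} with $\eta$ for $0\leq\eta\in L^{\infty}(0,T;W^{1,\infty}(\Omega))$ and adds, so that the indefinite cross term $\eta\nabla S_{\varepsilon}\cdot\nabla I_{\varepsilon}$ cancels between the divergence-form and non-divergence-form equations and the only surviving quadratic term is $\eta\chi(S_{\varepsilon})S_{\varepsilon}|\nabla I_{\varepsilon}|^2\geq0$; taking $\liminf$ and using weak lower semicontinuity of this weighted Dirichlet energy gives \eqref{Varcomb}, and re-adding the limit $S$-equation tested with $\eta I$ isolates \eqref{EqIweak}. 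Your Minty-type scheme --- identify $\partial_t I-g(S,I)=\sqrt{S}\,\eta$, reduce \eqref{EqIweak} to the nonnegativity of the defect $\lim_n\bigl(-\int\phi\psi\,\nabla(S_{\varepsilon_n}-S)\cdot\nabla(I_{\varepsilon_n}-I)\bigr)$, and obtain that sign by testing the difference of the $S$-equations with $\phi\psi(I_{\varepsilon_n}-I)$ --- rests on exactly the same two structural facts (the cancellation of the $\nabla S\cdot\nabla I$ cross terms between the two equations, and $\chi(S_{\varepsilon_n})S_{\varepsilon_n}=KS_{\varepsilon_n}(1-S_{\varepsilon_n})\geq0$), but it makes the lost compactness explicit as a nonnegative defect measure, at the cost of more bookkeeping. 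Two points need more care than your sketch supplies, though neither is a gap: first, the vanishing of $\int_0^T\phi\langle\partial_t(S_{\varepsilon_n}-S),\psi(I_{\varepsilon_n}-I)\rangle\,dt$ after integrating by parts in time does not follow from the compact support of $\phi$ alone --- it uses \eqref{estptIe}, so that $\partial_t(I_{\varepsilon_n}-I)$ stays bounded in $L^2(0,T;L^2(\Omega))$ and pairs against $S_{\varepsilon_n}-S\rightarrow0$ strongly in $L^2(0,T;L^2(\Omega))$; second, the sign bookkeeping in the final step should be made explicit: since the taxis flux enters the tested difference equation with a minus sign, the identity is $-\int\phi\psi\,\nabla(S_{\varepsilon_n}-S)\cdot\nabla(I_{\varepsilon_n}-I)=\int\phi\psi\,\chi(S_{\varepsilon_n})S_{\varepsilon_n}|\nabla(I_{\varepsilon_n}-I)|^2+o(1)\geq o(1)$, which is what you need but is stated somewhat loosely in your text.
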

\begin{proof}
 Once again, our starting point is the weak formulation from {\it Definition \ref{DefSole}} and {\it Lemma \ref{LemConv}} on convergence. 
% a variational reformulation of \eqref{RepDege}. Let $\xi\in L^2(0,T;H^1(\Omega))$. 
% Multiplying \eqref{EqSe} by  $\xi$ and then integrating by parts where necessary, we compute that
% \begin{align}
%  \int_0^T\int_{\Omega}\xi\partial_t S_{\varepsilon}\,dxdt
%  =&-\int_0^T\int_{\Omega}\left(\nabla S_{\varepsilon}+\chi(S_{\varepsilon})S_{\varepsilon}\nabla I_{\varepsilon}\right)\cdot\nabla\xi\,dxdt+\int_0^T\int_{\Omega}f(S_{\varepsilon},I_{\varepsilon})\xi\,dxdt.\label{EqSIar}
% \end{align}
% This is, of course, a standard weak formulation of a parabolic equation in divergence form. 
To begin with, we construct a suitable reformulation of \eqref{EqIe}. Since this equation is not in divergence form, the standard approach based on testing and integration by parts is not a good foundation for a limit procedure. It turns out useful to construct instead a variational identity which combines equations for both solution components.  

\noindent
Let $0\leq\eta\in L^{\infty}(0,T;W^{1,\infty}(\Omega))$, so that $\eta I_{\varepsilon}\in  L^2(0,T;H^1(\Omega))$. Testing \eqref{EqSweake} and { \eqref{EqIe}} with $-\eta I_{\varepsilon}$ and  $\eta$, respectively, adding  the results, integrating over $(0,T)$, and using the chain rule where necessary, we compute 
% {\cb [Mir scheint, hier habe man nicht \eqref{EqIstronge} mit $\eta $ getestet, sondern \eqref{EqIe}.]}
\begin{align}
 &\int_0^T\left(-\left<\partial_tS_{\varepsilon},\eta I_{\varepsilon}\right>+\int_{\Omega}\eta\partial_tI_{\varepsilon}\,dx\right) dt\nonumber\\
 =&\int_0^T\int_{\Omega}\left(\left(\nabla S_{\varepsilon}+\chi(S_{\varepsilon})S_{\varepsilon}\nabla I_{\varepsilon}\right)\cdot\nabla (\eta I_{\varepsilon})-\nabla I_{\varepsilon}\cdot \nabla\left(\eta (\varepsilon+S_{\varepsilon})\right)\right) \,dxdt\nonumber\\
 &+\int_0^T\int_{\Omega}\left(-f(S_{\varepsilon},I_{\varepsilon})I_{\varepsilon}+g(S_{\varepsilon},I_{\varepsilon})\right)\eta\,dxdt\nonumber\\
 =&\int_0^T\int_{\Omega}\left(\eta\chi(S_{\varepsilon})S_{\varepsilon}|\nabla I_{\varepsilon}|^2+\left(I_{\varepsilon}\nabla S_{\varepsilon} -(\varepsilon+S_{\varepsilon})\nabla I_{\varepsilon}+\chi(S_{\varepsilon})S_{\varepsilon}I_{\varepsilon}\nabla I_{\varepsilon}\right)\cdot \nabla\eta\right) \,dxdt\nonumber\\
 &+\int_0^T\int_{\Omega}\left(-f(S_{\varepsilon},I_{\varepsilon})I_{\varepsilon}+g(S_{\varepsilon},I_{\varepsilon})\right)\eta\,dxdt.\label{Vareps}
\end{align}
Using \eqref{convnabIe}-\eqref{convSet} together with the dominated convergence theorem and the compensated compactness, we can pass to the limit in \eqref{EqSweake} and \eqref{inie} along the sequence  $\varepsilon_n\rightarrow 0$ which yields \eqref{EqSweak} and \eqref{inisuper}, respectively. 
Owing to the presence of  the quadratic term $|\nabla I_{\varepsilon}|^2$ in one of the integrals, we cannot justify the equality while passing to the limit in \eqref{Vareps}. Instead, we take limit inferior on both sides, use the above mentioned convergences and theorems, as well as the weak lower semicontinuity of a norm, and, finally, the chain rule where necessary and thus arrive at
\begin{align}
 & \int_0^T\left(-\left<\partial_t S,\eta I\right>+\int_{\Omega}\eta{ \partial_tI}\,dx \right) dt\nonumber\\
 \geq& \int_0^T\int_{\Omega}\left(\eta\chi(S)S|\nabla I|^2+\left(I\nabla S -S\nabla I+\chi(S)SI\nabla I\right)\cdot \nabla\eta+\left(-f(S,I)I+g(S,I)\right)\eta\right) \,dxdt\nonumber\\
 =&\int_0^T\int_{\Omega}\left(\left(\nabla S+\chi(S)S\nabla I\right)\cdot\nabla(\eta I)-\nabla I\cdot \nabla(\eta S)+\left(-f(S,I)I+g(S,I)\right)\eta\right) \,dxdt.\label{Varcomb}
\end{align}
Plugging ${ \varphi=}\eta I\in L^2(0,T;H^1(\Omega))$ into \eqref{EqSweake}, integrating over $(0,T)$, { passing to the limit for $\varepsilon_n\to 0$,} and then adding the result to  \eqref{Varcomb} finally yields \eqref{EqIweak}.
\end{proof}

\section{Numerical simulations and discussion}\label{sec:nums}

\noindent
In order to illustrate the solution behavior we present in this section some 2D numerical simulation results for the system \eqref{RepDeg}.
%\begin{subequations}\label{NumSys}
%	\begin{empheq}[left={ \empheqlbrace\ }]{align}
%	&\partial_t S=\nabla\cdot\left(\nabla S+\chi(S,I)S\nabla I\right)+f(S,I)&&\text{ in }\R^+\times\Omega,\label{NS:EqS}\\
%	&\partial_t I=S\Delta I+g(S,I)&&\text{ in }\R^+\times\Omega,\label{NS:EqI}\\
%	&\partial_{\nu} S=\partial_{\nu} I=0&&\text{ in }\R^+\times\partial\Omega,\label{NS:bc}\\
%	&S(0,\cdot)=S_0,\qquad I(0,\cdot)=I_0&&\text{ in }\Omega,
%	\end{empheq}
%\end{subequations}
%with $f(S,I)$ and $g(S,I)$ as given in \eqref{eq_fg}. 
A finite difference scheme with a first order upwind discretization of the repellent taxis term was used to produce them. Here we show contour plots for the populations of susceptibles/infected at various time points in a square domain $\Omega =[0,10]^2$. We consider complementary initial densities $$I_0 = \sum_{i,j=1}^{3}C_i \exp(-(x^2_j + y^2_j)/2\epsilon),\qquad S_0 = 1-I_0 $$
%\begin{subequations}\label{T1}
%	\begin{align}
%		I_0 = \sum_{i,j=1}^{3}C_i \exp(-(x^2_j + y^2_j)/2\epsilon), \quad S_0 = 1-I_0,
%	\end{align}   
%\end{subequations}
and use parameters given in {\it Table \ref{TabPar}}.
%with $C_i$ and $\epsilon $ given constants. 
% %These conditions reproduce the situation of $\Omega $ being filled with susceptibles, except at the sites with infected (concentrated as normal distributions with three different peaks in $(x_j,y_j)_{j=1,2,3}$). 
% Figure \ref{fig:ICs} shows the two complementary initial populations.
% \begin{figure}[h!]
% 	% \begin{center}
% 	&\includegraphics[scale=0.23]{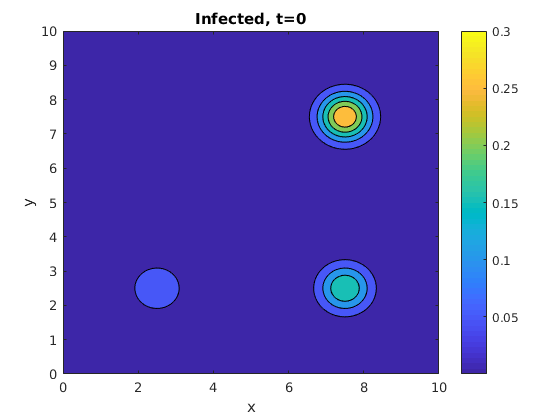}\qquad 
% 	&\includegraphics[scale=0.23]{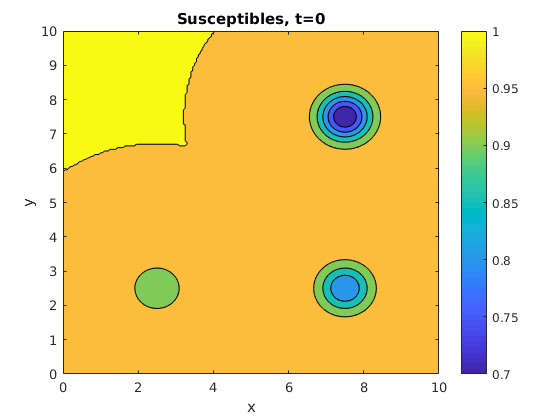}
% 	\caption{Initial conditions, with $C_1=0.1$, $C_2=0.2$, $C_3=0.3$, $\epsilon =0.25$.}\label{fig:ICs}
% \end{figure}
\noindent
\begin{table}[h]
\centering
\begin{tabular}{c|c|c|c|c|c|c|c}
 $C_1$&$C_2$&$C_3$&$\epsilon$&$\lambda_S$&$\lambda_I$& $\mu_I$&$\mu_S$\\\hline
 $0.1$&$0.2$&$0.3$&$0.25$&$0.5$&$0.5$&$0.05$&$0.01$
\end{tabular}\vspace{-1em}
\caption{\footnotesize Parameters}\vspace{-1em}
\label{TabPar}
\end{table}
{\it Figure \ref{both}}  shows simulations of the model in the cases with ($\chi=15$) and without ($\chi=0$) repellent taxis, respectively. The former exhibits a spread of the infection comparable with the latter case, but with a reduced suppression of the susceptibles and an overall slightly higher infected population density. Due to the infectives' diffusion with contact, however, the avoidance efficiency is diminished, which for even larger values of $\chi $ leads to a more effective spread of the invasion. Further numerical simulations (not shown in this paper) suggest the existence of a critical value of $\chi$ above which the repellent taxis actually triggers the opposite effect. Moreover, here the sensitivity of susceptibles towards infected was taken to be linearly decreasing  with $S$ 
% {\cb Wieso, $\chi (S)=K(1-S)$, das ist doch decreasing?}
which also contributed to the mentioned infection enhancement, but it is reasonable to assume its dependence also on $I$, more precisely on the interactions between the two populations. Analytically determining the critical $\chi $ range and its effect on the epidemic spread would be an interesting problem in the framework of travelling wave analysis. %A more careful model deduction would in fact 
\printbibliography
\begin{figure}[h]
\centering
\begin{tabular}{ccccc}
 {\rotatebox{90}{\footnotesize\phantom{ttt}infected}}&\includegraphics[scale=0.23]{initialsI}\
&\includegraphics[scale=0.23]{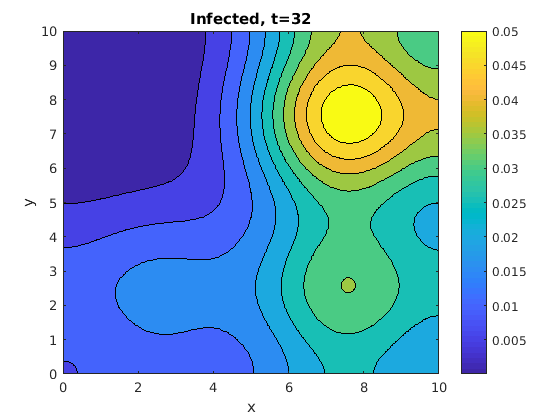}\ &\includegraphics[scale=0.23]{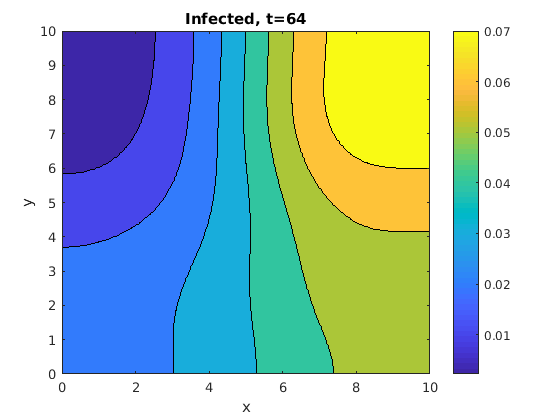}\ &\includegraphics[scale=0.23]{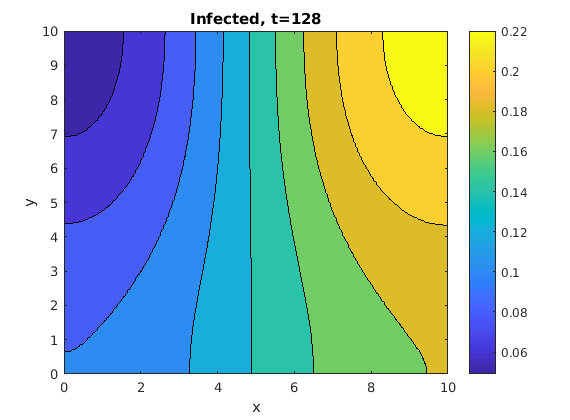}\\
{\rotatebox{90}{\footnotesize\phantom{ttt}susceptibles}}&\includegraphics[scale=0.23]{initialsS}\
&\includegraphics[scale=0.23]{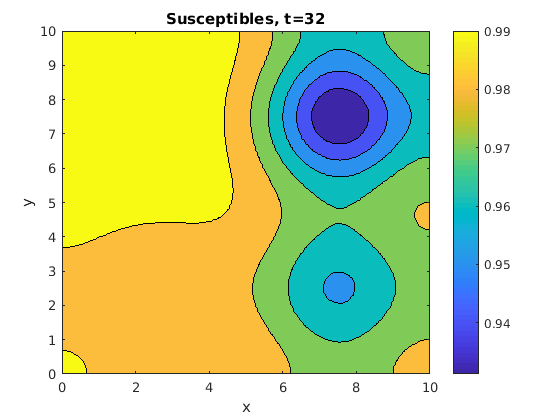}\ &\includegraphics[scale=0.23]{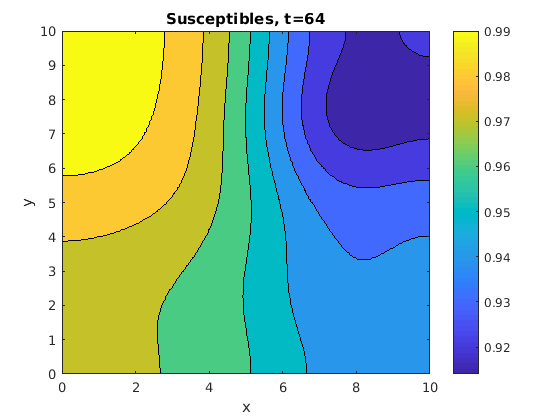}\ &\includegraphics[scale=0.23]{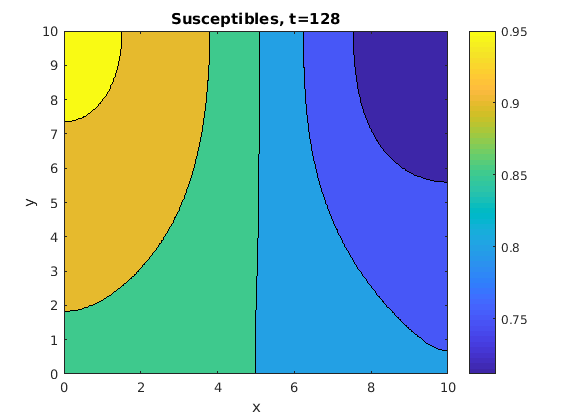}
\end{tabular}
% \caption{Simulation results for $\chi =15$
% % , $\lambda _S=\lambda_I=0.5$, $\mu _I=0.05$, $\mu _S=0.01$, 
% at different times. 
% % Upper row: infected, lower row: susceptibles.
% }\label{fig:taxis}	
% \end{center}
% \end{figure}            
% 
% 
% \begin{figure}[htb]
% \begin{center}
\begin{tabular}{ccccc}
  {\rotatebox{90}{\footnotesize\phantom{ttt}infected}}&\includegraphics[scale=0.23]{initialsI}\	&\includegraphics[scale=0.23]{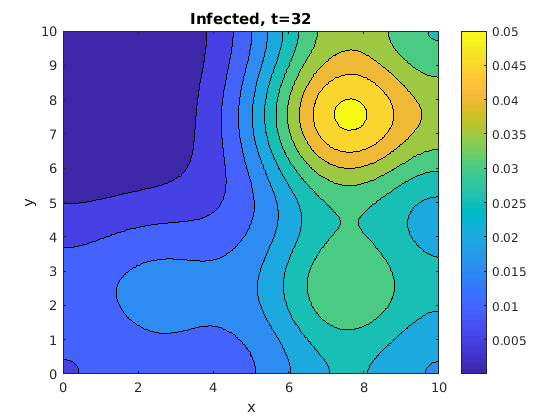}\ &\includegraphics[scale=0.23]{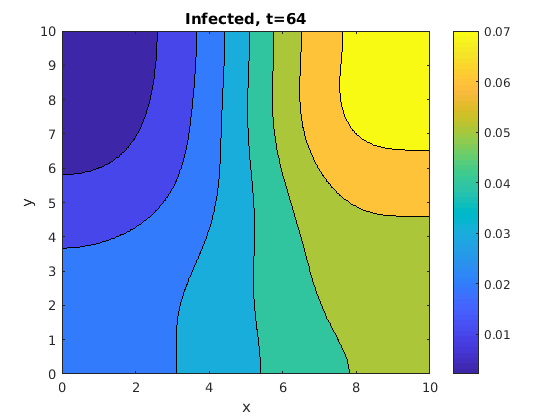}\ &\includegraphics[scale=0.23]{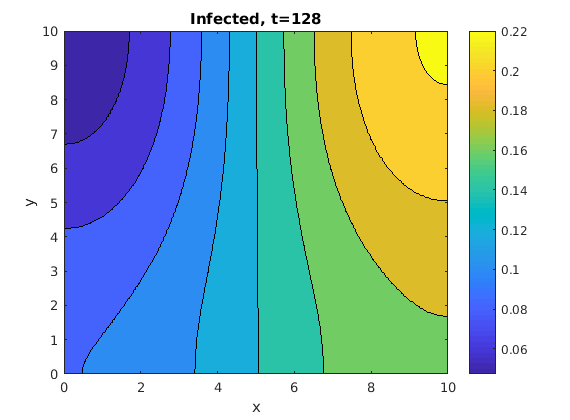}\\
{\rotatebox{90}{\footnotesize\phantom{ttt}susceptibles}}	&\includegraphics[scale=0.23]{initialsS}\	&\includegraphics[scale=0.23]{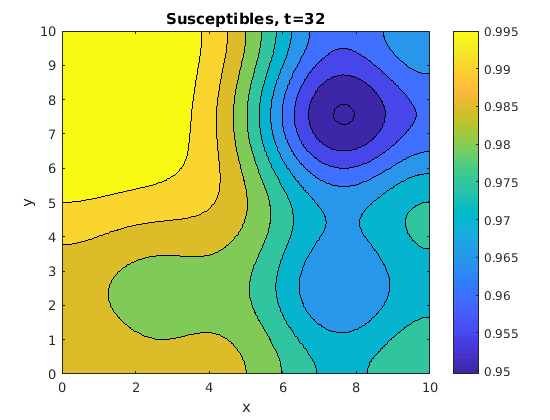}\ &\includegraphics[scale=0.23]{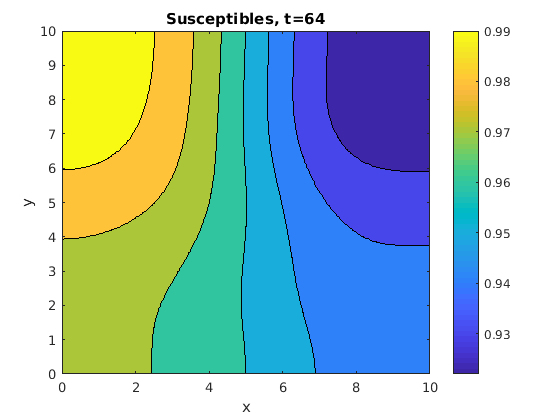}\ &\includegraphics[scale=0.23]{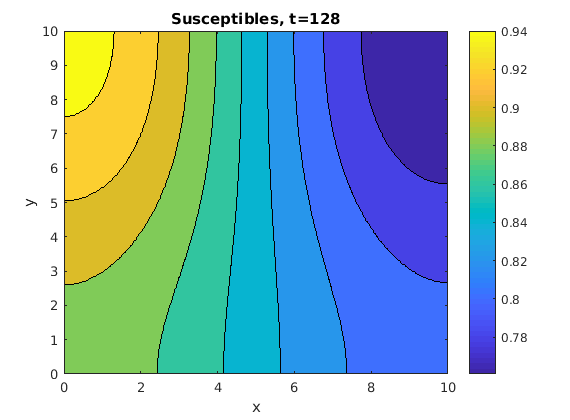}
\end{tabular}\vspace{-1em}
	\caption{\footnotesize Simulation results for $\chi=15$ (upper two rows) and $\chi =0$ (lower rows)
% 	, $\lambda _S=\lambda_I=0.5$, $\mu _I=0.05$, $\mu _S=0.01$, 
	at different times.} \vspace{-1.5em}
% 	Upper row: infected, lower row: susceptibles.
% 	}\label{fig:notaxis}	
% 	\end{center}
\label{both}
\end{figure}

% \phantomsection

\end{document}